\documentclass[draft]{amsart}
\usepackage{amsfonts,amssymb,amsmath,amsthm}
\usepackage{enumerate}
\usepackage{soul} 

\newtheorem{theorem}{Theorem}[section]
\newtheorem{lemma}[theorem]{Lemma}
\newtheorem{proposition}[theorem]{Proposition}
\newtheorem{corollary}[theorem]{Corollary}
\theoremstyle{definition}
\newtheorem{definition}[theorem]{Definition}
\newtheorem{remark}[theorem]{Remark}
\numberwithin{equation}{section}

\author{P.~M.~Gadea}
\address{
Instituto de F\'\i sica Fundamental\\
CSIC\\
Serrano 113-bis, 28006-Ma\-drid, Spain}
\email{p.m.gadea@csic.es}

\author{J.~C.~Gonz{\'a}lez-D{\'a}vila}
\address{
Departamento de Matem\'aticas, Estad\'\i stica e Investigaci\'on Operativa\\
Universidad de La Laguna\\ 38200-La Laguna, Tenerife, Spain}
\email{jcgonza@ull.es}

\author{J.~A.~Oubi\~na}
\address{
Departamento de Xeometr\'{\i}a e Topolox\'\i a,
Facultade de Matem\'a\-ticas,
Universidade de Santiago de Compos\-tela,
15782-Santiago de Compostela, Spain}
\email{ja.oubina@usc.es}

\begin{document}

\title[Homogeneous spin Riemannian manifolds]{Homogeneous spin Riemannian manifolds with the simplest Dirac operator}

\begin{abstract}
We show the existence of nonsymmetric homogeneous spin Riemannian manifolds
whose Dirac operator is like that on
a Riemannian symmetric spin space. Such manifolds are exactly the homogeneous spin Riemannian manifolds $(M,g)$ 
which are traceless cyclic with respect to some quotient expression $M=G/K$ and reductive decomposition $\mathfrak{g} = \mathfrak{k} \oplus \mathfrak{m}$. 
Using transversally symmetric fibrations of noncompact type, we give a list of them.
\end{abstract}

\maketitle

\noindent {\scriptsize {\it Keywords:} Dirac operator, homogeneous spin Riemannian manifolds, traceless
cyclic homogeneous Riemannian manifolds, Riemannian symmetric spaces. \newline
{\it MSC 2010:} 53C30, 53C35, 34L40. \newline
{\it Acknowledgments:} The first and second authors have been supported by DGI (Spain) Project MTM2013-46961-P.}

\section{Introduction} 
\label{sect1}

The Dirac operator $D$ on a Riemannian 
symmetric spin space $(M=G/K,g)$, $(G,K)$ being a Riemannian symmetric pair,
may be written (cf.\ \cite[pp.\ 179--180]{Ike}, \cite[p.\ 230]{KobNom}) as
\begin{equation}
\label{d0ss1}
D\psi = \sum_{i=1}^n X_i \cdot X_i(\psi),
\end{equation}
where $(X_1,\dots,X_n)$ is a positively oriented orthonormal basis of the $-1$-eigenspace $\mathfrak{m}$ of the
involutive automorphism on the Lie algebra $\mathfrak{g}$ of $G$  determined by $(G,K)$. 

Actually, Ikeda \cite{Ike} gave a general formula for $M = G/K$ being a homogeneous Riemannian manifold, 
with $G$ unimodular and $K$ compact. In turn, B\"ar \cite[Theorem~1]{Bar}
extended formula \eqref{d0ss1} for $G$ being not necessarily unimodular.

The main purpose of this paper is to show the existence of nonsymmetric homogeneous spin Riemannian manifolds
$(M = G/K,g)$, with $G$ semisimple and $K$ connected, whose Dirac operator is expressed as in formula \eqref{d0ss1},
and to give a list of them. 

To this end, we recall that a homogeneous Riemannian manifold $(M,g)$ is said \cite{GadGonOub2} to be {\it cyclic\/} if there exists a quotient expression $M = G/K$ 
and a reductive decomposition $\mathfrak{g} = \mathfrak{k} \oplus  \mathfrak{m}$ satisfying
\begin{equation}
\label{onetwo}
\mathop{\text{\LARGE$\mathfrak S$}\vrule width 0pt depth 2pt}_{XYZ}\,\langle [X,Y]_\mathfrak{m} ,Z\rangle = 0,\qquad X,Y,Z \in \mathfrak{m},
\end{equation}
where $\langle \cdot,\cdot \rangle$ denotes the $\mathrm{Ad}(K)$-invariant inner product on $\mathfrak{m}$ induced by $g$. 
If moreover $G$ is unimodular, $(M,g)$ is said to be {\it traceless cyclic}.

The study of cyclic and traceless cyclic homogeneous Riemannian manifolds was started by Tricerri and
Vanhecke \cite{TriVan} and Kowalski and Tricerri \cite{KowTri}. In the latter paper, the
classification of simply connected traceless cyclic homogeneous Riemannian manifolds 
of dimension $\leqslant 4$ was given. The present authors extended it to the cyclic case in \cite{GadGonOub2}. 
The first examples which are not cyclic metric Lie groups \cite{GadGonOub1} do appear in dimension four.

Using the mentioned B\"ar expression of the Dirac operator on homogeneous spin
Riemannian manifolds, we prove the following result.

\begin{theorem}
\label{dirac}
 A homogeneous spin Riemannian manifold $(M = G/K,g)$ has
Dirac operator like that on a Riemannian symmetric spin space if and only if it is
traceless cyclic.
\end{theorem}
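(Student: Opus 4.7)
My plan is to start from B\"ar's explicit expression \cite{Bar} for the Dirac operator of a reductive homogeneous spin manifold $M=G/K$ and to read off the two obstructions to reducing it to the symmetric-space form \eqref{d0ss1}. With respect to an orthonormal basis $(X_1,\dots,X_n)$ of $\mathfrak{m}$, that expression takes the schematic shape
\begin{equation*}
D\psi=\sum_{i=1}^n X_i\cdot X_i(\psi)+A\cdot\psi+B\cdot\psi,
\end{equation*}
where $A\in\Lambda^3\mathfrak{m}\subset\mathrm{Cl}(\mathfrak{m})$ is the $3$-form whose coefficient on $X_i\cdot X_j\cdot X_k$ (for $i<j<k$) is, up to a universal constant, the cyclic sum $\mathfrak{S}_{XYZ}\langle[X,Y]_{\mathfrak m},Z\rangle$ evaluated on $(X_i,X_j,X_k)$, while $B\in\Lambda^1\mathfrak{m}$ is the vector $-\tfrac12\sum_k\mathrm{tr}(\mathrm{ad}_{X_k}|_{\mathfrak m})\,X_k$ measuring the unimodularity defect of $G$. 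Thus \eqref{d0ss1} holds on every spinor $\psi$ if and only if $A\cdot\psi+B\cdot\psi=0$ for all $\psi$.

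The forward implication is then immediate. If $(M,g)$ is traceless cyclic, condition \eqref{onetwo} kills $A$ directly; and $G$ being unimodular, combined with the facts that $\mathrm{Ad}(K)$ acts on $\mathfrak{m}$ by isometries (so $\mathrm{tr}(\mathrm{ad}_X|_{\mathfrak m})=0$ for $X\in\mathfrak{k}$) and $[\mathfrak{m},\mathfrak{k}]\subset\mathfrak{m}$ (so $\mathrm{tr}(\mathrm{ad}_X|_{\mathfrak k})=0$ for $X\in\mathfrak{m}$), forces $\mathrm{tr}(\mathrm{ad}_X|_{\mathfrak m})=0$ for every $X\in\mathfrak{m}$, so that $B=0$ as well.

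For the converse I would work inside the Clifford algebra. Since the spin representation is faithful on $\mathrm{Cl}^{\mathrm{odd}}(\mathfrak{m})$ (in both the even- and odd-dimensional cases), the identity $A\cdot\psi+B\cdot\psi=0$ for every spinor $\psi$ promotes to $A+B=0$ in $\mathrm{Cl}(\mathfrak{m})$. The Chevalley symbol identifies $\mathrm{Cl}(\mathfrak{m})\cong\Lambda\mathfrak{m}$ as graded vector spaces, so $\Lambda^1\mathfrak{m}$ and $\Lambda^3\mathfrak{m}$ meet trivially and therefore $A=0$ and $B=0$ separately. Vanishing of $A$ recovers the cyclic condition \eqref{onetwo}, while the trace identities above, run in reverse, translate $B=0$ into the unimodularity of $G$.

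The step that I expect will require the most care is making the schematic decomposition above sharp enough to match the paper's definitions: B\"ar's formula is often stated with several sign and normalization conventions, and one must verify that the ``degree-3'' correction is exactly the totally antisymmetric piece of $\langle[X_i,X_j]_{\mathfrak m},X_k\rangle$, and that the ``degree-1'' correction vanishes precisely under $G$ unimodular rather than under a strictly weaker trace condition. Once this bookkeeping is settled, the Clifford-algebraic separation of the two degrees closes the argument in both directions.
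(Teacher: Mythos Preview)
Your proposal is correct and follows essentially the same route as the paper: both derive the result from B\"ar's formula \eqref{barfor1} by arguing that the degree-$1$ and degree-$3$ Clifford corrections must vanish separately. Your explicit invocation of the faithfulness of the spin representation on $\mathrm{Cl}^{\mathrm{odd}}(\mathfrak{m})$ is a slightly more careful justification of the separation step than the paper's bare appeal to linear independence in $\mathrm{Cl}_{\mathbb{C}}(\mathfrak{m})$, and it spares you the separate treatment of the case $n=2$ that the paper handles via \cite{TriVan}.
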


Theorem \ref{dirac} yields us to view, in spin geometry, traceless cyclic homogeneous spin Riemannian manifolds as the simplest manifolds after Riemannian symmetric spin spaces.

In the compact case, the expression \eqref{d0ss1} for the Dirac operator on a homogeneous spin Riemannian
manifold characterizes the class of Riemannian symmetric spaces. Specifically, we have  

\begin{theorem}
\label{theo11}
Let $M = G/K$ be a homogeneous spin Riemannian manifold with $G$ semisimple compact and $K$ connected.
Then it has Dirac operator like that on a Riemannian symmetric spin space if and only if $(G,K)$ is a Riemannian symmetric pair. 
\end{theorem}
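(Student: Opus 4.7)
The plan is to reduce via Theorem~\ref{dirac} to showing the equivalence, for $G$ compact semisimple and $K$ connected, between existence of a reductive decomposition satisfying \eqref{onetwo} and $(G,K)$ being a Riemannian symmetric pair; since $G$ semisimple is unimodular, the traceless qualifier is automatic. The $(\Leftarrow)$ direction is immediate: the Cartan decomposition of a Riemannian symmetric pair has $[\mathfrak{m},\mathfrak{m}]\subseteq\mathfrak{k}$, so $[X,Y]_\mathfrak{m}=0$ and \eqref{onetwo} holds trivially.

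For the nontrivial $(\Rightarrow)$ direction, the key observation is that when $G$ is compact semisimple the Killing form $B$ of $\mathfrak{g}$ is negative definite, so $-B$ is an $\mathrm{Ad}(G)$-invariant positive definite inner product. I would work with the $(-B)$-orthogonal reductive decomposition $\mathfrak{g}=\mathfrak{k}\oplus\mathfrak{m}$ (available because $\mathfrak{k}^{\perp_B}$ is automatically $\mathrm{Ad}(K)$-invariant) and introduce the unique positive, $B$-self-adjoint, $\mathrm{Ad}(K)$-equivariant operator $A\colon\mathfrak{m}\to\mathfrak{m}$ with $\langle X,Y\rangle = -B(AX,Y)$. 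Diagonalizing $A$ yields $\mathfrak{m}=\bigoplus_\lambda V_\lambda$ with $\lambda>0$, each $V_\lambda$ being $\mathrm{Ad}(K)$-invariant.

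For $X\in V_\lambda$, $Y\in V_\mu$, $Z\in V_\nu$, since $AZ\in\mathfrak{m}$ is $(-B)$-orthogonal to $\mathfrak{k}$,
\[
\langle [X,Y]_\mathfrak{m}, Z\rangle \;=\; -B\bigl([X,Y]_\mathfrak{m},AZ\bigr)\;=\; -\nu\,B([X,Y],Z),
\]
while the $\mathrm{ad}$-invariance of $B$ gives the cyclic identity $B([X,Y],Z)=B([Y,Z],X)=B([Z,X],Y)=:\beta$. Hence \eqref{onetwo} reads $-(\lambda+\mu+\nu)\beta=0$, and the positivity of the eigenvalues forces $\beta=0$. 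Varying the eigenvectors yields $B(\mathfrak{m},[\mathfrak{m},\mathfrak{m}])=0$, i.e.\ $[\mathfrak{m},\mathfrak{m}]\subseteq\mathfrak{m}^{\perp_B}=\mathfrak{k}$.

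This establishes a Cartan-type decomposition on $\mathfrak{g}$, with Lie-algebra involution $\sigma=\mathrm{Id}_\mathfrak{k}\oplus(-\mathrm{Id}_\mathfrak{m})$. The remaining — and in my view main — technical point is to promote $\sigma$ to an involutive automorphism of $G$ whose fixed-point set has $K$ as its identity component, so that $(G,K)$ is genuinely a Riemannian symmetric pair; this is standard using the connectedness of $K$, the compactness of $G$, and, if necessary, passage to the simply connected cover.
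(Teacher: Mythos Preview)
Your proof is correct and follows essentially the same route as the paper: the paper's Lemma~\ref{ldes} and Proposition~\ref{psemisimple} amount to diagonalizing the metric against $B$ on $\mathfrak{m}$ and reducing \eqref{onetwo} to the relation $c^{k_c}_{i_aj_b}(\lambda_a+\lambda_b+\lambda_c)=0$, which in the compact case (all $\lambda$ of the same sign) forces $[\mathfrak{m},\mathfrak{m}]_\mathfrak{m}=0$ exactly as in your argument. The final promotion of the Lie-algebra involution to a Riemannian symmetric pair is dispatched in the paper by citing \cite[Chapter~VII, Exercise~10]{Hel}, matching what you flag as the remaining standard step.
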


In the noncompact case, we find a large class of nonsymmetric homogeneous spin Riemannian manifolds
whose Dirac operator admits an expression like \eqref{d0ss1} with respect to a suitable reductive decomposition.
Let $G$ be a connected Lie group and let $K$ and $L$ be closed subgroups of $G$ such that $K\subset L\subset G$.
Consider the natural projection $\pi\colon M = G/K\to N = G/L$, $gK\mapsto gL$. Then $\pi$ gives the homogeneous fibration
\[
F = L/K\to M = G/K \overset{\pi}{\to} N = G/L.
\]

We say that $\pi$ is a {\it transversally symmetric fibration\/} \cite{Gon} if $(G,L)$ is a Riemannian symmetric pair.
Moreover, $\pi$ is said to be of {\it compact type, noncompact type} or {\it Euclidean type\/} according to the type of $(G,L)$.
We will focus on the noncompact type. Let ${\mathfrak g} = {\mathfrak l} \oplus {\mathfrak p}$ be a Cartan decomposition,
where ${\mathfrak l}$ is the Lie algebra of $L$ and let $B$ be the Killing form of $G$. We assume that $K$ is compact and connected.
Then we have $B$-orthogonal reductive decompositions ${\mathfrak g} = {\mathfrak k} \oplus {\mathfrak m}$ and ${\mathfrak l}
= {\mathfrak k} \oplus {\mathfrak f}$, where ${\mathfrak m} = {\mathfrak f}\oplus {\mathfrak p}$.
A $G$-invariant metric on $M$ defined by an $\mathrm{Ad}(K)$-invariant inner product
on $\mathfrak{m}$ such that $\mathfrak{m} = \mathfrak{f} \oplus \mathfrak{p}$ is an orthogonal decomposition, 
is said to be {\it adapted\/} to the fibration. Due to Theorem \ref{dirac}, we seek for adapted (traceless) cyclic metrics. 

The triples $(G,L,K)$ of Lie groups where $G/K$ is a nonsymmetric cyclic homogeneous Riemannian manifold, with $K$ connected, fibering {over an irreducible Riemannian symmetric space $G/L$ (necessarily of noncompact type) and with isotropy-irreducible fibre type $L/K,$ are listed, for $G$ a classical simple Lie group, in Tables $1$ and $2$. For $G$ simple exceptional, the corresponding triples of Lie algebras $(\mathfrak{g},\mathfrak{l},\mathfrak{k})$ are given in Table $3$.

For transversally symmetric fibrations of noncompact type, we show that there exists a one-to-one correspondence between the sets of homogeneous spin Riemannian structures on the total space $M$ of the fibration $\pi$ and on the fibre type $F$. Concretely, we prove the following
\begin{theorem}
\label{conspi1}
Let $\pi\colon M = G/K \to N = G/L$ be a transversally symmetric fibration of noncompact type. Let $(\widetilde{G},\widetilde{\pi})$ be the universal covering of $G$ and $\widetilde{L} = \widetilde{\pi}^{-1}(L)$, $\widetilde{K} = \widetilde{\pi}^{-1}(K)$. The total space $M$, with a metric adapted to the fibration
$M = \widetilde{G}/\widetilde{K} \to \widetilde{G}/\widetilde{L}$, is a homogeneous spin Riemannian manifold 
with adapted  (see {\em Definition \ref{defhsrm})}  quotient $\widetilde{G}/\widetilde{K}$ if and only if the fibre type $F$, with
adapted quotient $\widetilde{L}/\widetilde{K}$,
is also a homogeneous spin Riemannian manifold. Moreover, the Dirac operator on $M$ is like that on a Riemannian symmetric spin space if and only if the pair $(L,K)$ is associated with an orthogonal symmetric Lie algebra.
\end{theorem}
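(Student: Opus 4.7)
The plan splits naturally into the two assertions of the theorem. For the spin-equivalence I would unpack what an adapted-quotient homogeneous spin structure is: a lifting of the isotropy homomorphism $\lambda\colon\widetilde{K}\to\mathrm{SO}(\mathfrak{m})$ to $\mathrm{Spin}(\mathfrak{m})$. Because the metric is adapted, $\mathrm{Ad}(\widetilde{K})$ preserves the splitting $\mathfrak{m}=\mathfrak{f}\oplus\mathfrak{p}$, so $\lambda$ factors through the block subgroup $\mathrm{SO}(\mathfrak{f})\times\mathrm{SO}(\mathfrak{p})$; its preimage under the spin covering is the usual central $\mathbb{Z}_2$-extension $[\mathrm{Spin}(\mathfrak{f})\times\mathrm{Spin}(\mathfrak{p})]/\{\pm(1,1)\}$, so a lift of $\lambda$ amounts to a pair of compatible lifts of its two blocks. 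The $\mathfrak{p}$-block is the restriction to $\widetilde{K}$ of the isotropy representation of $\widetilde{G}/\widetilde{L}$; passing to the universal cover makes $\widetilde{G}/\widetilde{L}$ a simply connected (hence contractible) Riemannian symmetric space of noncompact type, which carries a canonical invariant spin structure, supplying the lift on $\mathfrak{p}$. Consequently the obstruction collapses to the $\mathfrak{f}$-block, which is precisely the adapted-quotient spin condition for $F=\widetilde{L}/\widetilde{K}$, giving the desired one-to-one correspondence.

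For the Dirac characterization, by Theorem \ref{dirac} the Dirac operator is as in \eqref{d0ss1} iff the adapted metric is traceless cyclic; semisimplicity of $\widetilde{G}$ yields unimodularity, so only the cyclic identity \eqref{onetwo} needs to be checked on $\mathfrak{m}=\mathfrak{f}\oplus\mathfrak{p}$. I would perform a case analysis on how many of $X,Y,Z$ lie in $\mathfrak{p}$ versus $\mathfrak{f}$, applying the Cartan relations $[\mathfrak{l},\mathfrak{l}]\subset\mathfrak{l}$, $[\mathfrak{l},\mathfrak{p}]\subset\mathfrak{p}$, $[\mathfrak{p},\mathfrak{p}]\subset\mathfrak{l}$ together with the orthogonality $\mathfrak{f}\perp\mathfrak{p}$. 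The configurations with an odd number of $\mathfrak{p}$-arguments vanish automatically; the all-three-in-$\mathfrak{p}$ case vanishes because $[\mathfrak{p},\mathfrak{p}]_{\mathfrak{m}}\subset\mathfrak{f}$ is $\mathfrak{p}$-orthogonal. Only two substantive conditions survive: the pure $\mathfrak{f}$-cyclic condition, and a mixed two-$\mathfrak{p}$-one-$\mathfrak{f}$ identity.

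With the Killing-form-proportional adapted metric on $\mathfrak{p}$ and the corresponding proportional choice on $\mathfrak{f}$, $\mathrm{ad}$-invariance of $B$ rewrites the pure $\mathfrak{f}$-cyclic condition as $B([\mathfrak{f},\mathfrak{f}]_{\mathfrak{f}},\mathfrak{f})=0$; negative definiteness of $B|_{\mathfrak{f}}$ then forces $[\mathfrak{f},\mathfrak{f}]\subset\mathfrak{k}$, which is exactly the orthogonal-symmetric-Lie-algebra property of $(\mathfrak{l},\mathfrak{k})$. The mixed condition, after the same $B$-invariant manipulation, collapses to a scalar relation between the metric scalings on $\mathfrak{p}$ and $\mathfrak{f}$, which is fulfilled by the canonical proportionality. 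Conversely, if $(L,K)$ is symmetric then $[\mathfrak{f},\mathfrak{f}]_{\mathfrak{f}}=0$ trivializes the first condition while an appropriate scaling settles the second, and Theorem \ref{dirac} delivers the simplest Dirac operator.

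The main obstacle I anticipate is the careful bookkeeping of the scaling degrees of freedom in the adapted metric on $\mathfrak{f}$, particularly when $\mathfrak{f}$ is reducible as an $\mathrm{Ad}(\widetilde{K})$-module so that the metric is not determined by a single scalar; one must combine the $\mathrm{ad}$-invariance of $B$ with the nondegeneracy of $B$ on each summand, and rule out degenerate alternatives such as $[\mathfrak{f},\mathfrak{p}]=0$, to ensure that the structural conclusion $[\mathfrak{f},\mathfrak{f}]\subset\mathfrak{k}$ is unavoidable in the ``only if'' direction rather than merely sufficient.
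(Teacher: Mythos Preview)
Your proposal is sound and reaches the result, but the route for the spin equivalence differs from the paper's. You analyze the lift of the isotropy representation $\widetilde{K}\to SO(\mathfrak{m})$ directly through the block structure $SO(\mathfrak{f})\times SO(\mathfrak{p})$ and its spin preimage, reducing the total obstruction to the $\mathfrak{f}$-block once the $\mathfrak{p}$-block is known to lift (via the homogeneous spin structure on $N=\widetilde{G}/\widetilde{L}$). The paper instead argues topologically: it first shows (Proposition~\ref{conspi}(i)) that $M$ is spin iff $F$ is spin, using Stiefel--Whitney classes and the contractibility of $N$; then, having established from the homotopy sequence of $\widetilde{L}\to\widetilde{G}\to N$ that $\widetilde{L}$ is simply connected, it invokes B\"ar's lemma (Lemma~\ref{lBar}) on both $\widetilde{G}/\widetilde{K}$ and $\widetilde{L}/\widetilde{K}$ to convert the existence of spin structures into the existence of isotropy lifts. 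Your approach is more hands-on and delivers the bijection of homogeneous spin structures explicitly; the paper's route is shorter because it outsources the obstruction theory to characteristic classes, at the cost of the auxiliary simple-connectedness of $\widetilde{L}$.

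For the Dirac part, your case analysis on $\mathfrak{f}$ versus $\mathfrak{p}$ components and the use of $\mathrm{ad}$-invariance of $B$ is exactly the content of Proposition~\ref{noncomsemsim}, which the paper simply cites alongside Theorem~\ref{dirac}. The obstacle you flag---handling a reducible $\mathfrak{f}$ with several independent scaling factors in the ``only if'' direction---is resolved in the paper via Lemma~\ref{ldes} and Proposition~\ref{psemisimple}: the cyclic condition becomes $c^{k_c}_{i_aj_b}(\lambda_a+\lambda_b+\lambda_c)=0$, and since every $\lambda_a$ attached to a summand of $\mathfrak{f}$ is strictly negative, the triple sum never vanishes, forcing all pure-$\mathfrak{f}$ structure constants to zero regardless of the individual scalings. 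This uniform sign argument is the missing ingredient that dissolves your anticipated bookkeeping difficulty; the concern about ruling out $[\mathfrak{f},\mathfrak{p}]=0$ is not actually needed.
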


As a direct consequence, if $F$ is simply connected in Theorem \ref{conspi1}, the (simply connected) homogeneous spin Riemannian manifold $(M = G/K,g)$ has Dirac operator like that on a Riemannian symmetric spin space if and only if $F = L/K$ is a (compact) Riemannian symmetric spin space.

We recall in Table $4$ the Cahen and Gutt \cite[Theorems 8 and 12]{CahGut}
classification of compact simply connected Riemannian symmetric spin spaces $L/K$
with $L$ simple, with a few minor changes. Then, using Theorem \ref{conspi1} and Proposition \ref{conspi}, we give many examples of nonsymmetric homogeneous spin Riemannian manifolds $G/K$ having Dirac operator like that on a Riemannian symmetric spin space, in Tables $1$, $2$ and (through the corresponding pairs $(\mathfrak{g},\mathfrak{k}))$ in Table $3$. 

The paper is organized as follows. In Section~\ref{secthr} the notion of homogeneous spin Riemannian manifold is recalled and, 
after giving B\"ar's expression of the Dirac operator on homogeneous spin Riemannian
manifolds, we prove Theorem \ref{dirac}.
In Section~\ref{secfou}, we study nonsymmetric cyclic homogeneous Riemannian manifolds $M=G/K$ with $G$
semisimple. 
If $G$ is compact and $K$ connected, from Theorem \ref{dirac} above and Proposition \ref{psemisimple}, Theorem \ref{theo11} follows. 
Then we suppose that $G$ is not compact and $K$ is a connected compact subgroup of $G$ (see Proposition \ref{noncomsemsim}) and we obtain Tables $1$, $2$ and $3$ (up to the fifth column in Tables $1$ and $3$,
and up to the fourth column in Table $2$). 
In Section~\ref{secfiv}, Theorem \ref{conspi1} is proven, so having 
examples of nonsymmetric homogeneous spin Riemannian manifolds $G/K$ having Dirac operator like that on a Riemannian symmetric spin space. In 
Section~\ref{secsix}, we give a detailed example, checking that the nonsymmetric manifold
$\widetilde{SL(2,\mathbb{R})}$, with each metric of a biparametric family of traceless cyclic
metrics, has Dirac operator like that on a Riemannian symmetric spin space.

\section{The Dirac operator on traceless cyclic homogeneous spin Riemannian manifolds}
\label{secthr}

Let $(M,g)$ be an $n$-dimensional spin Riemannian manifold and let $\mathrm{Spin}(n) \hookrightarrow P \to M$ be a fixed spin structure on $M$. 
Denote by $\rho$ the basic spin representation of $\mathrm{Spin}(n)$.
Let $\Sigma(M)$ be the corresponding spinor bundle over $M$ associated with $P$, that is,
\[
\Sigma(M) = P \times_{\rho} \Delta,
\]
$\Delta = \mathbb{C}^{2^{[n/2]}}$ being the representation space of $\rho$. 
Spinor fields are defined as sections $\psi\colon M \to \Sigma(M)$ or, equivalently, as maps $P \to \Delta$ which are
equivariant with respect to the action of $\mathrm{Spin}(n)$, that is,
$\psi(za) = \rho(a^{-1})\psi(z)$, $z\in P$, $a\in \mathrm{Spin}(n)$.

The Levi-Civita connection $\nabla$ on $(M,g)$ naturally induces \cite[Chapter II, Theorem 4.14]{LawMic} a 
connection $\nabla^\Sigma$ on $\Sigma(M)$, which may be described
\cite[p.\ 224]{AmmBar} as follows. Let $(e_1,\dots,e_n)$
be a positively oriented local orthonormal frame defined on a connected open subset $U\subset M$.
Then $(e_1,\dots,e_n)$ is a local section of the bundle of
positively oriented orthonormal frames $\mathcal{SO}(M)$.
Denote by $\hat{\lambda}$ the projection map $\hat{\lambda}\colon P \to \mathcal{SO}(M)$, and let $\ell$ be a lift to $P$ such that $\hat{\lambda} {\makebox[7pt]{\raisebox{1.5pt}{\scriptsize $\circ$}}} \ell = (e_1,\dots,e_n)$. Then $\ell$ defines a trivialization
$U \times \Delta \cong \Sigma(M)|_U$ of $\Sigma(M)$ over $U$, with respect to which one has the following formula for $\nabla^\Sigma$,
\begin{equation}
\label{nablasig}
\nabla^{\Sigma}_{e_i}\psi = e_i(\psi) + \frac14 \sum_{j,k=1}^n \Gamma^k_{ij} e_j \cdot e_k \cdot \psi,
\end{equation}
where $\psi \in \Gamma(\Sigma(M))$, $e_i(\psi)$ denotes differentiation of $\psi$ by $e_i$,
$\Gamma^k_{ij}$ are the corresponding Christoffel symbols with respect to $(e_1,\dots,e_n)$ and
$e_j \cdot e_k \cdot \psi := \rho(e_j)$ $\rho(e_k)\psi$.

The Dirac operator $D \colon \Gamma(\Sigma(M)) \to \Gamma(\Sigma(M))$ is then defined by
\begin{equation}
\label{dir}
D\psi = \sum_{i=1}^n e_i \cdot \nabla^{\Sigma}_{e_i}\psi.
\end{equation}

A connected homoge\-ne\-ous Riemannian manifold
$(M,g)$ can be described as a coset manifold $G/K$, where $G$ is a
Lie group, which we suppose to be connected, acting transitively and effectively by isometries
on $M$, $K$ is the isotropy subgroup of $G$ at
some point $o\in M$, the origin of $G/K$, and $g$ is a
$G$-invariant Riemannian metric. Moreover, we can assume  that $G$ is a closed subgroup of the full isometry group $I(M,g)$ of $(M,g)$. Then, $K$ is a compact subgroup and $G/K$ admits a reductive decomposition, that is,  there is an
$\mathrm{Ad}(K)$-invariant subspace $\mathfrak{m} $ of the Lie algebra $\mathfrak{g}$ of $G$
such that $\mathfrak{g}$ splits as the vector space direct sum $\mathfrak{g}  = \mathfrak{k} \oplus  \mathfrak{m} $,
$\mathfrak{k}$ being the Lie algebra of $K$.

Next, suppose that $(M = G/K,g)$ is an oriented homogeneous Riemannian manifold with a fixed reductive decomposition $\mathfrak{g}  = \mathfrak{k} \oplus  \mathfrak{m}$. By using the identification $T_{o}(M) \cong \mathfrak{m}$ and taking into account that $G$ is connected, 
the isotropy representation 
$\chi$ may be expressed as the map~$\chi\colon K \to SO(\mathfrak{m})$, $\chi(k)(X) = \mathrm{Ad}_{k}(X)$, for all $k\in K$ and $X\in \mathfrak{m}$. Then the
choice of a positively oriented orthonormal basis $(X_1,\dots,X_n)$ of $\mathfrak{m}$ gives us the identification 
\[
G\times_{\chi}SO(\mathfrak{m}) \equiv \mathcal{SO}(M), 
\]
via the map $[(a,A)] \mapsto (\tau_{a}(o);(\tau_{a})_{*o}AX_{1},\dotsc ,(\tau_{a})_{*o}AX_{n})$.  
Let $\lambda\colon \mathrm{Spin}(\mathfrak{m}) \to SO(\mathfrak{m})$
be the usual two-sheet covering map.
 If there exists a Lie group homomorphism $\widetilde\chi \colon K \to \mathrm{Spin}(\mathfrak{m})$ lifting $\chi$, that is, $\lambda \circ \widetilde\chi = \chi$, we can define a spin structure $P_{\widetilde\chi}$ on $M$ by 
\[
P_{\widetilde\chi} = G\times_{\widetilde\chi} \mathrm{Spin}(\mathfrak{m}), 
\] 
 with covering map $\hat{\lambda}\colon\! P_{\widetilde\chi}\to \mathcal{SO}(M)= G\times_{\chi} SO(\mathfrak{m})$, given by $\hat{\lambda}([(a,\!A)] = [(a,\!\lambda(A))]$.

Throughout this paper, we consider homogeneous spin Riemannian manifolds as in the next (well-known) definition.  
\begin{definition}
\label{defhsrm}
An oriented connected homogeneous Riemannian manifold $(M,g)$ of dimension $n$ is said to be a {\it homogeneous spin Riemannian manifold\/} if there exists a quotient expression $M = G/K$ and a reductive decomposition $\mathfrak{g} = \mathfrak{k} \oplus \mathfrak{m}$ such that the isotropy representation $\chi$ can be lifted to a Lie group homomorphism $\widetilde{\chi}\colon K \to \mathrm{Spin}( \mathfrak{m})$. The spin structure 
$\mathrm{Spin}(n) \to P_{\widetilde\chi} = G \times_{\widetilde\chi} \mathrm{Spin}(\mathfrak{m}) \to M$ on $M$ is called a {\it homogeneous spin structure}. Then $G/K$ (resp., $\mathfrak{g} = \mathfrak{k} \oplus \mathfrak{m})$ is said to be  a quotient expression (resp., reductive decomposition) {\it adapted\/} to the homogeneous spin structure.
\end{definition}

A homogeneous Riemannian manifold $(M = G/K,g)$ with $G$ simply connected and admitting a spin structure is a homogeneous spin Riemannian manifold. Concretely, we have the following
\begin{lemma}
\label{lBar} 
{\rm \cite[Lemma 3]{Bar}} If $G$ is simply connected, there exists a one-to-one correspondence between the set of spin structures of the oriented manifold $(M = G/K,g)$ and the set of lifts $\widetilde\chi$ of $\chi$.
\end{lemma}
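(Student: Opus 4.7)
The forward direction $\widetilde\chi\mapsto P_{\widetilde\chi}=G\times_{\widetilde\chi}\mathrm{Spin}(\mathfrak m)$ is the construction given just before the statement; my plan concerns the inverse. Given a spin structure $\mathrm{Spin}(n)\hookrightarrow P\to M$, I would pull it back to $G$ via the projection $\pi\colon G\to G/K=M$. Under the natural trivialization $\pi^{\ast}\mathcal{SO}(M)\cong G\times SO(\mathfrak m)$, the bundle $\pi^{\ast}P$ becomes a principal $\mathrm{Spin}(\mathfrak m)$-bundle over $G$ double-covering $G\times SO(\mathfrak m)$, nontrivially on each $SO(\mathfrak m)$-fibre.

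Simple-connectedness of $G$ enters twice. First, $H^{1}(G;\mathbb Z/2)=0$ together with the K\"unneth formula gives $H^{1}(G\times SO(\mathfrak m);\mathbb Z/2)\cong H^{1}(SO(\mathfrak m);\mathbb Z/2)=\mathbb Z/2$, so up to isomorphism there is a unique fibrewise-nontrivial double cover, namely $G\times\mathrm{Spin}(\mathfrak m)$; hence $\pi^{\ast}P\cong G\times\mathrm{Spin}(\mathfrak m)$ as spin-bundles over $G$. Second, the natural left $G$-action on $\mathcal{SO}(M)$ lifts uniquely to $P$, because the covering-space obstruction vanishes once $\pi_{1}(G)=0$.

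Fixing the unique $p_{0}\in P_{o}$ over the reference frame $b_{o}\in\mathcal{SO}(M)_{o}$ determined by the orthonormal basis $(X_{1},\dots,X_{n})$ of $\mathfrak m$ (the other choice differs by the central element $-1\in\mathrm{Spin}(\mathfrak m)$ and produces the same $\widetilde\chi$), the map $(g,s)\mapsto (g,\,g\cdot p_{0}\cdot s)$ is a $G$-equivariant trivialization of $\pi^{\ast}P$, under which the right $K$-action on $\pi^{\ast}P$ induced by right translation on $G$ takes the form $(g,s)\cdot k=(gk,\widetilde\chi(k)^{-1}s)$, with $\widetilde\chi(k)\in\mathrm{Spin}(\mathfrak m)$ characterized uniquely by the relation $k\cdot p_{0}=p_{0}\cdot\widetilde\chi(k)$. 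Associativity of the $K$-action makes $\widetilde\chi$ a Lie-group homomorphism; applying $\hat\lambda$ to its defining relation gives $\lambda\circ\widetilde\chi=\chi$; and the assignment $[g,s]\mapsto g\cdot p_{0}\cdot s$ furnishes an isomorphism $P_{\widetilde\chi}\cong P$ of spin structures.

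I expect the main obstacle to be organizing the three commuting actions on $\pi^{\ast}P$ (the left $G$-action, the right $\mathrm{Spin}(\mathfrak m)$-action, and the right $K$-action) coherently, and then verifying that the two assignments $\widetilde\chi\mapsto P_{\widetilde\chi}$ and $P\mapsto\widetilde\chi_{p_{0}}$ are strict mutual inverses: in particular that distinct lifts give non-isomorphic spin structures, which reduces to observing that a spin-structure isomorphism $P_{\widetilde\chi_{1}}\cong P_{\widetilde\chi_{2}}$ must be given fibrewise by an element in $\ker\lambda=\{\pm 1\}$ and hence forces $\widetilde\chi_{1}=\widetilde\chi_{2}$. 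Simple-connectedness of $G$ is used essentially in both the trivialization of $\pi^{\ast}P$ and the existence of a global lifted $G$-action on $P$; without it, an arbitrary spin structure need not correspond to any lift.
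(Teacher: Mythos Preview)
The paper does not prove this lemma; it simply quotes it as \cite[Lemma~3]{Bar}. Your argument is the standard one and is essentially correct: pull back $P$ along $\pi\colon G\to G/K$, use simple-connectedness of $G$ to trivialize $\pi^{*}P$ and to lift the left $G$-action to $P$, and read off $\widetilde\chi$ from how $K$ acts on the chosen basepoint $p_{0}$. Two small points of presentation: (i) the phrase ``the unique $p_{0}$'' is inaccurate---there are exactly two preimages of $b_{o}$---but you immediately note that the two choices differ by $-1$ and yield the same $\widetilde\chi$, so the mathematics is fine; (ii) the K\"unneth/$H^{1}$ argument can be replaced by the more direct observation that $\pi^{*}P\to G$ is a principal $\mathrm{Spin}(\mathfrak m)$-bundle and the map $G\to\mathcal{SO}(M)$, $g\mapsto g\cdot b_{o}$, lifts to $P$ (covering-space lifting, since $\pi_{1}(G)=0$), giving a global section and hence a trivialization. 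Your injectivity check (an isomorphism $P_{\widetilde\chi_{1}}\cong P_{\widetilde\chi_{2}}$ is fibrewise multiplication by a locally constant $\epsilon\in\{\pm1\}$, forcing $\widetilde\chi_{1}=\widetilde\chi_{2}$) is exactly right.
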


Consider the universal covering $(\widetilde{G},\widetilde{\pi})$ of a connected Lie group $G$. Then $\widetilde{\pi}\colon \widetilde{G}\to G$ is a Lie group homomorphism. Put $\widetilde{K}=\widetilde{\pi}^{-1}(K)$ and denote by $\mathrm{Ad}^{\widetilde{G}}$ the adjoint representation of $\widetilde{G}$.

\begin{lemma}
\label{luniversal} 
Let $(G/K,g)$ be a homogeneous Riemannian manifold with reductive decomposition $\mathfrak{g} = \mathfrak{k}\oplus \mathfrak{m}$ and $G$-invariant metric $g$ determined by an $\mathrm{Ad}(K)$-invariant inner product $\langle\cdot,\cdot\rangle$ on $\mathfrak{m}$. If $\mathfrak{m}$ and $\langle\cdot,\cdot\rangle$ are $\mathrm{Ad}^{\widetilde{G}}(\widetilde{K})$-invariant, then $(G/K,g)$ is naturally isometric to $(\widetilde{G}/\widetilde{K},\tilde{g})$, where $\tilde{g}$ is the $\widetilde{G}$-invariant metric determined by $\langle\cdot,\cdot\rangle$.
\end{lemma}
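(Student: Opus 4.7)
The plan is to construct a natural diffeomorphism $\Phi\colon\widetilde{G}/\widetilde{K}\to G/K$ out of the covering map $\widetilde{\pi}$, and then use the hypotheses to check that it transports $\tilde g$ to $g$. Define $\Phi(a\widetilde{K}):=\widetilde{\pi}(a)K$. This is well-defined since $\widetilde{\pi}(\widetilde{K})\subset K$, and bijective: surjectivity is immediate from surjectivity of $\widetilde{\pi}$, and injectivity follows because $\widetilde{\pi}(a_1^{-1}a_2)\in K$ forces $a_1^{-1}a_2\in\widetilde{\pi}^{-1}(K)=\widetilde{K}$. The relation $\Phi\circ q_{\widetilde{G}}=q_G\circ\widetilde{\pi}$ (with $q_G,q_{\widetilde{G}}$ the natural projections) together with the fact that $\widetilde{\pi}$ is a smooth covering (hence a local diffeomorphism) and the $q$'s are smooth submersions shows $\Phi$ is smooth; equality of dimensions plus bijectivity then upgrades it to a diffeomorphism.

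Next I would verify that $\tilde g$ is actually defined. Via $d\widetilde{\pi}_e$ identify $\widetilde{\mathfrak{g}}=\mathfrak{g}$ and correspondingly $\widetilde{\mathfrak{k}}=\mathfrak{k}$, so $\widetilde{\mathfrak{g}}=\widetilde{\mathfrak{k}}\oplus\mathfrak{m}$. The hypothesis that $\mathfrak{m}$ is $\mathrm{Ad}^{\widetilde{G}}(\widetilde{K})$-invariant guarantees that this splitting is a bona fide reductive decomposition for $\widetilde{G}/\widetilde{K}$, while the $\mathrm{Ad}^{\widetilde{G}}(\widetilde{K})$-invariance of $\langle\cdot,\cdot\rangle$ ensures the existence of a unique $\widetilde{G}$-invariant Riemannian metric $\tilde g$ on $\widetilde{G}/\widetilde{K}$ whose value at the origin $\tilde o$ is given by $\langle\cdot,\cdot\rangle$ under the canonical identification $T_{\tilde o}(\widetilde{G}/\widetilde{K})\cong\mathfrak{m}$.

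Finally, to show $\Phi$ is an isometry, note that $\Phi\circ q_{\widetilde{G}}=q_G\circ\widetilde{\pi}$ implies $\Phi\circ\tau_{\tilde a}=\tau_{\widetilde{\pi}(\tilde a)}\circ\Phi$ for every $\tilde a\in\widetilde{G}$, so $\Phi$ intertwines the left actions through $\widetilde{\pi}$. Because both $\tilde g$ and $g$ are invariant under their respective groups, it suffices to prove that $(d\Phi)_{\tilde o}$ is a linear isometry onto $T_oM$. Under the canonical identifications $T_{\tilde o}(\widetilde{G}/\widetilde{K})\cong\mathfrak{m}\cong T_o(G/K)$ coming from $d(q_{\widetilde{G}})_e$, $d\widetilde{\pi}_e=\mathrm{id}$, and $d(q_G)_e$, the differential $(d\Phi)_{\tilde o}$ becomes the identity of $\mathfrak{m}$, and the two inner products at the origin are by construction the same $\langle\cdot,\cdot\rangle$.

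The step one must be most careful with is the compatibility of the reductive decomposition with the possibly disconnected group $\widetilde{K}$: the identity $\widetilde{\pi}(\widetilde{K})=K$ together with naturality of the adjoint representation gives $\mathrm{Ad}^{\widetilde{G}}_{\tilde k}=\mathrm{Ad}^{G}_{\widetilde{\pi}(\tilde k)}$ on $\mathfrak{g}$, so the invariance hypotheses are in fact consistent with (and indeed reduce to) the original $\mathrm{Ad}(K)$-invariance; this is the only place where the non-connectedness of $\widetilde{K}$ could cause issues, and it is neutralised by the stated hypothesis.
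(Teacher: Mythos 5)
Your proof is correct and follows essentially the same route as the paper: you construct the same map $\tilde a\widetilde K\mapsto\widetilde\pi(\tilde a)K$, establish the same intertwining relation with the translations, and compare the metrics via the identification of both tangent spaces at the origin with $\mathfrak m$ (the paper writes out the pointwise computation explicitly, whereas you reduce it to the origin by invariance — the same argument). Your closing remark that $\mathrm{Ad}^{\widetilde G}_{\tilde k}=\mathrm{Ad}^{G}_{\widetilde\pi(\tilde k)}$, so the stated invariance hypothesis is consistent with the $\mathrm{Ad}(K)$-invariance already assumed, is a correct and harmless addition.
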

\begin{proof} 
Because $\widetilde{G}$ acts transitively on the left on $G/K$ via the mapping $(\tilde{a},\widetilde{\pi}(\tilde{b})K)\mapsto \widetilde{\pi}(\tilde{a}\tilde{b})K$, for all $\tilde{a},\tilde{b}\in \widetilde{G}$, we have that the map $j\colon \widetilde{G}/\widetilde{K}\to G/K$, defined by $j(\tilde{a}\widetilde{K}) = \widetilde{\pi}(\tilde{a})K$, is a diffeomorphism. Denote by $\tau_{\widetilde{\pi}(\tilde{a})}$ and $\tilde{\tau}_{\tilde{a}}$ the translations on $G/K$ and $\widetilde{G}/\widetilde{K}$, respectively. Then,
\[
\tau_{\widetilde{\pi}(\tilde{a})}{\makebox[7pt]{\raisebox{1.1pt}{\scriptsize $\circ$}}} j = j {\makebox[7pt]{\raisebox{1.1pt}{\scriptsize $\circ$}}}\tilde{\tau}_{\tilde{a}},\;\;\;\mbox{\rm for all}\;\tilde{a}\in \widetilde{G}.
\]
Denoting by $o$ and $\tilde{o}$ the corresponding origins of $G/K$ and $\widetilde{G}/\widetilde{K}$, the tangent spaces $T_{o}(G/K)$ and $T_{\tilde{o}}(\widetilde{G}/\widetilde{K})$ are identified with $\mathfrak{m}$ by the isomorphism $j_{*o}$. Hence, for all $u,v\in T_{\tilde{\tau}_{\tilde{a}}(\tilde{o})}(\widetilde{G}/\widetilde{K})$, we get 
\begin{align*}
\tilde{g}_{\tilde{\tau}_{\tilde{a}}(\tilde{o})}(u,v) & = \big\langle (\tilde{\tau}_{\tilde{a}^{-1}})_{*\tilde{\tau}_{\tilde{a}}(\tilde{o})}u,(\tilde{\tau}_{\tilde{a}^{-1}})_{*\tilde{\tau}_{\tilde{a}}(\tilde{o})}v\big\rangle \\[0.4pc]
& = \big\langle j_{*\tilde{o}}(\tilde{\tau}_{\tilde{a}^{-1}})_{*\tilde{\tau}_{\tilde{a}}(\tilde{o})}u,j_{*\tilde{o}}(\tilde{\tau}_{\tilde{a}^{-1}})_{*\tilde{\tau}_{\tilde{a}}(\tilde{o})}v\big\rangle\\[0.4pc]
 & = \big\langle (\tau_{\pi(\tilde{a}^{-1})})_{*\tau_{\pi(\tilde{a})}(\tilde{o})}j_{*\tilde{\tau}_{\tilde{a}}(\tilde{o})}u, (\tau_{\pi(\tilde{a}^{-1})})_{*\tau_{\pi(\tilde{a})}(\tilde{o})}j_{*\tilde{\tau}_{\tilde{a}}(\tilde{o})}v\big\rangle \\[0.4pc]
 & = g_{\tau_{\pi(\tilde{a})}(o)}(j_{*\tilde{\tau}_{\tilde{a}}(\tilde{o})}u, j_{*\tilde{\tau}_{\tilde{a}}(\tilde{o})}v). \qedhere
 \end{align*}  
\end{proof}

\begin{proposition}
\label{psimple}
A simply connected homogeneous Riemannian manifold $(M = G/K,$ $g)$ equipped with a spin structure is a homogeneous spin Riemannian manifold with adapted quotient expression $\widetilde{G}/\widetilde{K}$.
\end{proposition}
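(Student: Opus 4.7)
The plan is to reduce to B\"ar's Lemma \ref{lBar} by passing from $G$ to its simply connected universal cover $\widetilde{G}$. Since the covering homomorphism $\widetilde{\pi}\colon \widetilde{G}\to G$ induces an isomorphism of Lie algebras, I may identify the Lie algebra of $\widetilde{G}$ with $\mathfrak{g}$ and the Lie algebra of $\widetilde{K}=\widetilde{\pi}^{-1}(K)$ with $\mathfrak{k}$. The given reductive decomposition $\mathfrak{g} = \mathfrak{k}\oplus \mathfrak{m}$ of $M = G/K$ is then a candidate reductive decomposition for $\widetilde{G}/\widetilde{K}$, and the only thing to verify is the $\mathrm{Ad}^{\widetilde{G}}(\widetilde{K})$-invariance of $\mathfrak{m}$ and of the inner product $\langle\cdot,\cdot\rangle$ defining $g$.

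That invariance comes for free: differentiating the identity $\widetilde{\pi}(\tilde{k}\tilde{a}\tilde{k}^{-1}) = \widetilde{\pi}(\tilde{k})\widetilde{\pi}(\tilde{a})\widetilde{\pi}(\tilde{k})^{-1}$ between inner automorphisms gives $\mathrm{Ad}^{\widetilde{G}}(\tilde{k}) = \mathrm{Ad}^{G}(\widetilde{\pi}(\tilde{k}))$ under the Lie-algebra identification, and since $\widetilde{\pi}(\widetilde{K})=K$ this yields $\mathrm{Ad}^{\widetilde{G}}(\widetilde{K}) = \mathrm{Ad}^{G}(K)$. The original $\mathrm{Ad}^{G}(K)$-invariance therefore transfers, so Lemma \ref{luniversal} applies and delivers a natural isometry $j\colon(\widetilde{G}/\widetilde{K},\tilde{g})\to (M,g)$. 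I would then pull back the given spin structure along $j$: since $j$ is an orientation-preserving isometry it covers an isomorphism of the bundles of positively oriented orthonormal frames, so the pull-back is a genuine $\mathrm{Spin}(n)$-structure on $\widetilde{G}/\widetilde{K}$.

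Because $\widetilde{G}$ is simply connected by construction, Lemma \ref{lBar} applied to $\widetilde{G}/\widetilde{K}$ then matches this pulled-back spin structure with a Lie group homomorphism $\widetilde{\chi}\colon \widetilde{K}\to \mathrm{Spin}(\mathfrak{m})$ lifting the isotropy representation, realizing it as $P_{\widetilde{\chi}} = \widetilde{G}\times_{\widetilde{\chi}}\mathrm{Spin}(\mathfrak{m})$. This exhibits $M$ as a homogeneous spin Riemannian manifold with adapted quotient $\widetilde{G}/\widetilde{K}$, as claimed. The only technical point to be careful about is the pull-back step of the previous paragraph; the simply-connectedness hypothesis on $M$ is used only implicitly, absorbed into the existence of $j$ as a global diffeomorphism and into the universal property of $\widetilde{G}$ that underlies Lemma \ref{lBar}. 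Beyond that, the argument is essentially bookkeeping around the universal cover and the two lemmas already at our disposal.
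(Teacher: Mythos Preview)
Your argument is correct and follows the same two-lemma strategy as the paper, whose entire proof reads: ``It follows from Lemmas \ref{lBar} and \ref{luniversal}, using that $\widetilde{K}$ is connected.'' The difference is in how the hypothesis of Lemma \ref{luniversal}---the $\mathrm{Ad}^{\widetilde{G}}(\widetilde{K})$-invariance of $\mathfrak{m}$ and of $\langle\cdot,\cdot\rangle$---is checked. The paper uses the simple connectedness of $M$ to deduce, via the homotopy exact sequence of $\widetilde{K}\hookrightarrow\widetilde{G}\to\widetilde{G}/\widetilde{K}\cong M$, that $\widetilde{K}$ is connected, and then promotes $\mathrm{ad}(\mathfrak{k})$-invariance to $\mathrm{Ad}^{\widetilde{G}}(\widetilde{K})$-invariance. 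You instead observe directly that $\mathrm{Ad}^{\widetilde{G}}(\tilde{k}) = \mathrm{Ad}^{G}(\widetilde{\pi}(\tilde{k}))$, whence $\mathrm{Ad}^{\widetilde{G}}(\widetilde{K}) = \mathrm{Ad}^{G}(K)$, and the invariance is inherited from the original data regardless of whether $\widetilde{K}$ is connected.

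One correction to your closing commentary: your argument does \emph{not} in fact use the simple connectedness of $M$ anywhere. Lemma \ref{luniversal} delivers the global isometry $j$ from the Ad-invariance alone (its proof makes no appeal to $\pi_1$), and Lemma \ref{lBar} requires only that $\widetilde{G}$ be simply connected, which it is by construction irrespective of $\pi_1(M)$. So you have actually established a statement slightly stronger than the one written. It is the paper's route, not yours, that genuinely consumes the hypothesis $\pi_1(M)=0$, spending it to obtain connectedness of $\widetilde{K}$.
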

\begin{proof} 
It follows from Lemmas \ref{lBar} and \ref{luniversal}, using that $\widetilde{K}$ is connected.
\end{proof}

\begin{remark} 
\label{pnon-comp}
A (Riemannian) manifold $M$ admits a spin structure if and only if the Stiefel-Whitney classes $w_1(M)$ and $w_2(M)$ vanish. 
A Riemannian symmetric space $M$ of noncompact type is contractible \cite[Chapter XI, Theorem 8.6]{KobNom},
so its Stiefel-Whitney classes vanish and then it admits a spin structure. From Proposition \ref{psimple}, $M$ is a homogeneous spin Riemannian manifold.
\end{remark}

The spinor bundle $\Sigma_{\bar\chi}(M)$ of the homogeneous spin structure  $P_{\widetilde\chi}$ is given \cite[Lemma 4]{Bar} by 
\[
\Sigma_{\tilde\chi}(M) =  G\times_{\rho{\makebox[7pt]{\raisebox{0.1pt}{\scriptsize $\circ$}}} \widetilde\chi}\Delta. 
\]
Hence, spinor fields can be viewed as $\rho{{\makebox[7pt]{\raisebox{0.6pt}{\scriptsize $\circ$}}}} 
\widetilde\chi$-equivariant maps $\psi\colon G\to \Delta$.

The spinor connection $\nabla^{\Sigma}$ induced by the Levi-Civita connection of $(M,g)$ is given \cite{Bar} by 
\[
\nabla^{\Sigma}_{(\tau_{g})_{*o}X}\psi = \Big(X(\psi) + \frac{1}{4}\sum_{i,j} c_{ij}(X)X_{i}\cdot X_{j}\cdot \psi\Big) (g),
\]
for all $X\in \mathfrak{m}$, $g\in G$, where $X(\psi)(g) = (\mathrm{d}/\mathrm{d}t)_{\mid t = 0} (\psi(g\exp tX))$  and 
\[
c_{ij}(X) = \langle \nabla_{X_{i}}X^{*},X_{j}\rangle = \frac12 \{ -\langle [X_i,X]_\mathfrak{m} , X_j \rangle +
\langle [X_j,X_i]_\mathfrak{m}, X \rangle + \langle [X_j,X]_\mathfrak{m}, X_i \rangle \},
\]
where $X^{*}$ stands for the fundamental vector field associated to $X$, that is, $X^{*}_{p} = (\mathrm{d}/\mathrm{d}t)_{\mid t = 0}((\exp tX)p)$, for all $p\in M.$ In particular, if $(G,K)$ is a Riemannian symmetric pair then $\nabla^{\Sigma}_{(\tau_{g})_{*o}X}\psi = X(\psi)$ and the Dirac operator
may be written, in terms of a positively oriented orthonormal basis $(X_1,\dotsc,X_n)$ of $\mathfrak{m} $ (cf.\ \cite{Ike}), as in \eqref{d0ss1}. For homogeneous spin Riemannian manifolds $(M = G/K,g)$, with adapted reductive decomposition $\mathfrak{g} = \mathfrak{k}\oplus \mathfrak{m}$, the Dirac operator is then given by  
$$
\begin{array}{lcl}
(D\psi)(g) &  = & \sum_{k}(\tau_{g})_{*o}X_{k}\cdot (\nabla^{\Sigma}_{(\tau_{g})_{*o}X_{k}}\psi)\\[0.4pc]
 &= & \sum_{k}(X_{k}\cdot X_{k}(\psi) + \frac{1}{4}\sum_{i,j} c_{ijk} X_k \cdot X_{i}\cdot X_{j}(\psi))(g),
\end{array}
$$
where $c_{ijk} = c_{ij}(X_{k}).$ Hence, the determination of the  coefficients $c_{ijk}$ gives the B\"ar formula \cite[\S2]{Bar}, which, using that $[\mathfrak{k},\mathfrak{m}]\subset \mathfrak{m}$ and $K$ is compact, can be expressed as 
\begin{equation}
\label{barfor1}
\begin{split}
D\psi  & = \sum_{i=1}^n X_i\cdot X_i(\psi)
               + \frac14 \Bigl(\,\sum_{i<j<k} \Big(\mathop{\text{\LARGE$\mathfrak S$}
\vrule width 0pt depth 2pt}_{X_{i}X_{j}X_{k}}\, \langle[X_i,X_j]_{\mathfrak m},X_k\rangle \Big)
\,X_i\cdot X_j\cdot X_k  \\
         & \hspace{32mm}  - 2\sum_{i=1}^n (\mathrm{tr}\, \mathrm{ad}_{X_{i}})X_{i}\Big)
\cdot \psi.
\end{split}
\end{equation}
\begin{remark} 
Let $\mathrm{Cl}(\mathfrak{m})$ be the Clifford algebra of the negative form on $\mathfrak{m}$ and
let $\mathrm{Cl}_{\mathbb{C}}(\mathfrak{m})$ be its complexified Clifford algebra. On account of the maps
$\mathfrak{m} \hookrightarrow \mathrm{Cl}(\mathfrak{m}) \hookrightarrow \mathrm{Cl}_{\mathbb{C}}
(\mathfrak{m}) \overset{\rho}{\to} \mathrm{End}(\Delta)$,
each $X_i$ in \eqref{barfor1} acts on spinors
as an element of $\mathrm{End}(\Delta)$ $ = \mathrm{End}(\mathbb{C}^{2^{[n/2]}})$. 
\end{remark}

\noindent {\sf Proof of Theorem \ref{dirac}.} Suppose that $(M=G/K,g)$ is a traceless cyclic homogeneous spin Riemannian manifold with adapted reductive decomposition $\mathfrak{g} = \mathfrak{k} \oplus \mathfrak{m}$. Then the last two summands in \eqref{barfor1} vanish and we are left with the expression \eqref{d0ss1}, which is the one for a Riemannian symmetric spin space.

Conversely, let $(M=G/K,g)$ be a homogeneous spin Riemannian manifold and let $\mathfrak{g} = \mathfrak{k} \oplus \mathfrak{m}$ be an adapted reductive decomposition. Suppose that it has Dirac operator like that on
a Riemannian symmetric spin space. Then we have, on account of \eqref{d0ss1} and \eqref{barfor1}, that 
\[
\sum_{i<j<k} \Big(\mathop{\text{\LARGE$\mathfrak S$}\vrule width 0pt depth 2pt}_{X_{i}X_{j}X_{k}}
\,\langle[X_i,X_j]_{\mathfrak m},X_k\rangle \Big) \,X_i\cdot X_j\cdot X_k - 2\sum_{i=1}^n (\mathrm{tr}\, \mathrm{ad}_{X_{i}})X_{i} = 0. 
\] 
Now, if $n\geqslant 3$, then $X_i$, $X_j\cdot X_k\cdot X_l$, $i=1,\dotsc,n$, $1\leqslant j<k<l\leqslant n$,
are linearly independent elements of $\mathrm{Cl}_{\mathbb{C}}(\mathfrak{m})$, hence each
cyclic sum $\mathfrak{S}_{X_iX_jX_k}\langle[X_iX_j]_{\mathfrak m},X_k\rangle$ and each coefficient $\mathrm{tr}\, \mathrm{ad}_{X_{i}}$ is null.
It is immediate that $(M = G/K,g)$ is  traceless cyclic and $\mathfrak{g} = \mathfrak{k} \oplus \mathfrak{m}$ is an adapted reductive decomposition.

If $n=2$, it is immediate from \cite[Theorem 3.1]{TriVan} and \cite[Definition 4.1]{GadGonOub2} that $G/K$ is cyclic. Now, if $M$ has Dirac operator like that
on a Riemannian symmetric spin space, one has $\mathrm{tr}\, \mathrm{ad}_{X}=0$, for all $X\in {\mathfrak m}$, and hence $G$ is unimodular, then $M=G/K$ is
also traceless cyclic. \hfill $\square$

\section{Cyclic homogeneous Riemannian manifolds $G/K$ with $G$ semisimple}
\label{secfou}

Let $(M=G/K,g)$ be a homogeneous Riemannian manifold with $G$ semisimple and let $B$ be the Killing form
of $G$. Since the isotropy subgroup $K$ is compact, the restriction of $B$ to $\mathfrak{k}$ is negative definite and one gets the direct sum
$\mathfrak{g}=\mathfrak{k}\oplus\mathfrak{m} $, where $\mathfrak{m} $ is the $B$-orthogonal complement of
$\mathfrak{k}$ in $\mathfrak{g}$. Such decomposition is clearly reductive. Then each $G$-invariant metric
on $M$ is uniquely determined by an $\mathrm{Ad}(K)$-invariant inner product on $\mathfrak{m} $.

\begin{lemma}
\label{ldes}
Let $\mathfrak{m} = \mathfrak{f} \oplus \mathfrak{p}$ be a $B$-orthogonal decomposition of $\mathfrak{m} $
into two $\mathrm{Ad}(K)$-invariant subspaces $\mathfrak{f} $ and $\mathfrak{p}$ $(\mathfrak{f} $
or $\mathfrak{p}$ may be zero$)$. Suppose that the restrictions   $B_{|\mathfrak{f}\times \mathfrak{f}}$ and
$B_{|\mathfrak{p}\times \mathfrak{p}}$ are negative definite and positive definite, respectively.
Then, for any $\mathrm{Ad}(K)$-invariant inner product $\langle\cdot,\cdot\rangle$ on $\mathfrak{m} $,
making orthogonal the subspaces $\mathfrak{f}$
 and $\mathfrak{p}$, there is an orthogonal splitting
\begin{equation}
\label{split31}
\mathfrak{m}  = \mathfrak{f} _{1}\oplus \dots \oplus\mathfrak{f} _{r} \oplus \mathfrak{p}_{1}\oplus \dots \oplus \mathfrak{p}_{s},
\end{equation}
where $K$ acts irreducibly on $\mathfrak{f} _{i}$ and $\mathfrak{p}_{j}$, for $i = 1,\dotsc ,r$ and $j = 1,\dotsc ,s$,
by the adjoint map and $\langle\cdot,\cdot\rangle$ is a \emph{diagonal} direct sum with respect to $B$ of the form
  \begin{equation}
\label{inner}
  \langle\cdot,\cdot\rangle = \sum_{a=1}^{r}\lambda_{a}B_{|\mathfrak{f} _{a}\times\mathfrak{f} _{a}}
+ \sum_{b=1}^{s}\lambda_{r+b}B_{|\mathfrak{p}_{b}\times\mathfrak{p}_{b}},
\end{equation}
where $\lambda_{a}<0$  for $a = 1,\dotsc ,r$, and $\lambda_{r+b}>0$ for
$b = 1,\dotsc ,s$.
\end{lemma}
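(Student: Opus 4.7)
My plan is to treat $\mathfrak{f}$ and $\mathfrak{p}$ separately. Since by hypothesis $\langle\cdot,\cdot\rangle$ already makes $\mathfrak{f}$ and $\mathfrak{p}$ orthogonal, it is enough to produce, inside each of them, a mutually orthogonal splitting into $\mathrm{Ad}(K)$-irreducible pieces on which $\langle\cdot,\cdot\rangle$ is a scalar multiple of $B$. I spell out the argument on $\mathfrak{f}$; the $\mathfrak{p}$-side is identical with signs swapped.

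The form $-B|_{\mathfrak{f}\times\mathfrak{f}}$ is an $\mathrm{Ad}(K)$-invariant positive definite inner product on $\mathfrak{f}$, and so is $\langle\cdot,\cdot\rangle|_{\mathfrak{f}\times\mathfrak{f}}$. Hence there is a unique $A\in\mathrm{End}(\mathfrak{f})$ with $\langle X,Y\rangle=-B(AX,Y)$ for all $X,Y\in\mathfrak{f}$. This $A$ is symmetric and positive with respect to $-B$, and it commutes with $\mathrm{Ad}(K)$ because both forms are $\mathrm{Ad}(K)$-invariant. Therefore $A$ diagonalizes as $\mathfrak{f}=V_{\mu_{1}}\oplus\cdots\oplus V_{\mu_{t}}$, with positive eigenvalues $\mu_{a}$ and $\mathrm{Ad}(K)$-invariant eigenspaces that are mutually $B$-orthogonal, hence also $\langle\cdot,\cdot\rangle$-orthogonal.

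Inside each eigenspace $V_{\mu_{a}}$ the two inner products are proportional, so I only need a $B$-orthogonal decomposition of $V_{\mu_{a}}$ into $\mathrm{Ad}(K)$-irreducibles. Such a decomposition exists by the standard fact that, since $B$ is $\mathrm{Ad}(K)$-invariant, the $B$-orthogonal complement of any $\mathrm{Ad}(K)$-invariant subspace of $V_{\mu_{a}}$ is again $\mathrm{Ad}(K)$-invariant; iterating splits $V_{\mu_{a}}$ into $B$-orthogonal irreducibles. Collecting these pieces across all eigenspaces yields $\mathfrak{f}=\mathfrak{f}_{1}\oplus\cdots\oplus\mathfrak{f}_{r}$, a $B$-orthogonal decomposition into $\mathrm{Ad}(K)$-irreducibles, and on each $\mathfrak{f}_{a}$ the operator $A$ acts by the scalar $\mu_{a'}$ corresponding to the eigenspace containing $\mathfrak{f}_{a}$, whence $\langle\cdot,\cdot\rangle|_{\mathfrak{f}_{a}\times\mathfrak{f}_{a}}=\lambda_{a}B|_{\mathfrak{f}_{a}\times\mathfrak{f}_{a}}$ with $\lambda_{a}=-\mu_{a'}<0$.

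Running the same argument on $\mathfrak{p}$ with the positive definite form $B|_{\mathfrak{p}\times\mathfrak{p}}$ in place of $-B|_{\mathfrak{f}\times\mathfrak{f}}$ produces $\mathfrak{p}=\mathfrak{p}_{1}\oplus\cdots\oplus\mathfrak{p}_{s}$ with $\langle\cdot,\cdot\rangle|_{\mathfrak{p}_{b}\times\mathfrak{p}_{b}}=\lambda_{r+b}B|_{\mathfrak{p}_{b}\times\mathfrak{p}_{b}}$ and $\lambda_{r+b}>0$. Concatenating the two gives \eqref{split31} and \eqref{inner}. The only mildly delicate point I foresee is the refinement inside an isotypic component, where the $K$-representation has multiplicity $>1$ and the irreducible decomposition is not unique; this is harmless because any choice of $\mathrm{Ad}(K)$-invariant complement can be taken $B$-orthogonal by the invariance of $B$, so both $B$- and $\langle\cdot,\cdot\rangle$-orthogonality are preserved simultaneously.
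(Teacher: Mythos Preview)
Your proof is correct and follows essentially the same approach as the paper: define the operator relating $\langle\cdot,\cdot\rangle$ to $B$, use its $\mathrm{Ad}(K)$-equivariance to obtain invariant eigenspaces that are simultaneously orthogonal for both forms, and then refine into irreducibles. The only cosmetic difference is that the paper defines a single operator $Q$ on all of $\mathfrak{m}$ via $\langle x,y\rangle=B(Qx,y)$ (which automatically preserves $\mathfrak{f}$ and $\mathfrak{p}$ since both forms make them orthogonal), whereas you work on $\mathfrak{f}$ and $\mathfrak{p}$ separately; your treatment of the refinement within eigenspaces is in fact more explicit than the paper's.
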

\begin{proof} 
Let $Q$ be the unique symmetric bilinear map on $\mathfrak{m} $ such that
\[
\langle x,y\rangle = B(Qx,y),\qquad x,y\in \mathfrak{m} .
\]
Using that $B$ is nondegenerate and $\langle\cdot,\cdot\rangle$ and $B$ are both $\mathrm{Ad}(K)$-invariant, one gets that $Q
{\makebox[7pt]{\raisebox{1.5pt}{\scriptsize $\circ$}}} \mathrm{Ad}_{k} = \mathrm{Ad}_{k}
{\makebox[7pt]{\raisebox{1.5pt}{\scriptsize $\circ$}}} Q$, for all $k\in K$. Hence, the eigenspaces
of $Q$ are $\mathrm{Ad}(K)$-invariant subspaces. They are mutually orthogonal for both
$\langle\cdot,\cdot\rangle$ and $B$. Moreover, on each irreducible $\mathrm{Ad}(K)$-invariant subspace, both 
$\langle\cdot,\cdot\rangle$ and $B$ must be proportional, with proportionality factor the
corresponding eigenvalue of $Q$. Since $\langle\cdot,\cdot\rangle$ is strictly definite on
$\mathfrak{f} $ and on $\mathfrak{p}$, such factors must be different from zero.
\end{proof}

\smallskip

Choose a $B$-orthogonal basis $\{e_{i}^{a} : 1\leqslant  i\leqslant n_{a}, 1\leqslant  a\leqslant r+s\}$
adapted to the splitting \eqref{split31},  where $n_{a} = \dim \mathfrak{f}_{a}$, if $a=1,\dotsc,r$,
and  $n_{r+b} = \dim \mathfrak{p}_{b}$, if $b=1,\dotsc ,s$.  This basis is $\langle\cdot,\cdot\rangle$-orthogonal. Put
 \[
[e^{a}_{i},e_{j}^{b}]_\mathfrak{m} = \sum_{1\leqslant c\leqslant r+s\atop 1\leqslant k\leqslant n_{c}}c_{i_{a}j_{b}}^{k_{c}}e_{k}^{c}.
\]
Then, arguing as in \cite[Proposition 8.2]{GadGonOub2}, the next result follows.

\begin{proposition}
\label{psemisimple} 
The $G$-invariant metric on $M$ determined by the inner product
{\rm(\ref{inner})} is {\rm (}traceless{\rm )} cyclic if and only if
\begin{equation}
\label{ccc}
c_{i_{a}j_{b}}^{k_{c}}(\lambda_{a} + \lambda_{b} + \lambda_{c}) =  0,
\end{equation}
where $a,b,c = 1,\dots ,r+s$, $1\leqslant i\leqslant n_{a}$,
$1\leqslant j\leqslant n_{b}$, $1\leqslant k\leqslant n_{c}$.
\end{proposition}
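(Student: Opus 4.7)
The plan is to test the cyclic identity (\ref{onetwo}) on triples drawn from the $B$-orthogonal basis $\{e_i^a\}$ and reduce it to the numerical condition (\ref{ccc}). By trilinearity, it suffices to evaluate the cyclic sum on an arbitrary triple $(e_i^a, e_j^b, e_k^c)$. First I would use the diagonal form (\ref{inner}) together with the $\langle\cdot,\cdot\rangle$- and $B$-orthogonality of the decomposition (\ref{split31}) to write
\[
\langle [e_i^a, e_j^b]_\mathfrak{m}, e_k^c\rangle = \lambda_c\, c_{i_a j_b}^{k_c}\, B(e_k^c, e_k^c),
\]
and similarly for the two other cyclic permutations, producing factors $\lambda_a$ and $\lambda_b$.

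The key step is to recognize that $c_{i_a j_b}^{k_c}\, B(e_k^c, e_k^c)$ is invariant under cyclic permutation of $(i_a, j_b, k_c)$. This follows from the ad-invariance of $B$: since $\mathfrak{k}$ and $\mathfrak{m}$ are $B$-orthogonal, $B([X,Y]_\mathfrak{m}, Z) = B([X,Y], Z)$ for $X,Y,Z \in \mathfrak{m}$, and then $B([X,Y], Z) = B(X, [Y,Z])$ yields
\[
c_{i_a j_b}^{k_c}\, B(e_k^c, e_k^c) = c_{j_b k_c}^{i_a}\, B(e_i^a, e_i^a) = c_{k_c i_a}^{j_b}\, B(e_j^b, e_j^b) =: T_{i_a j_b k_c}.
\]
Substituting, the cyclic sum $\mathfrak{S}_{XYZ}\langle [X,Y]_\mathfrak{m}, Z\rangle$ evaluated on $(e_i^a, e_j^b, e_k^c)$ collapses to $(\lambda_a + \lambda_b + \lambda_c)\, T_{i_a j_b k_c}$. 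Because $B$ is nondegenerate on each $\mathfrak{f}_a$ and $\mathfrak{p}_b$, one has $B(e_k^c, e_k^c) \neq 0$, so vanishing of the cyclic sum on all basis triples is equivalent to condition (\ref{ccc}).

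The parenthetical "traceless" is automatic here: every semisimple Lie group is unimodular, so the extra trace requirement entering the definition of traceless cyclicity holds trivially, and the single condition (\ref{ccc}) characterizes both notions simultaneously. I do not expect a serious obstacle; the only point requiring care is the passage from $[\cdot,\cdot]_\mathfrak{m}$ to the full bracket $[\cdot,\cdot]$ when invoking the ad-invariance of $B$, which is legitimate precisely because $B(\mathfrak{k}, \mathfrak{m}) = 0$. As indicated in the statement, the argument runs in close parallel with \cite[Proposition 8.2]{GadGonOub2}.
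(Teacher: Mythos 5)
Your argument is correct and is exactly the computation the paper delegates to \cite[Proposition 8.2]{GadGonOub2}: evaluate the cyclic sum on $B$-orthogonal basis triples, use the $\mathrm{ad}$-invariance of $B$ (together with $B(\mathfrak{k},\mathfrak{m})=0$) to factor out the cyclically invariant quantity $T_{i_aj_bk_c}$, and divide by the nonzero norms $B(e_k^c,e_k^c)$. The observation that semisimplicity makes $G$ unimodular, so that cyclic and traceless cyclic coincide here, is also the intended reading of the parenthetical.
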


\noindent {\sf Proof of Theorem \ref{theo11}.} 
From Proposition \ref{psemisimple} we have that the pair $(G,K)$ is a Riemannian symmetric pair, since it is associated with an orthogonal
symmetric Lie algebra of compact type and $K$ is connected \cite[Chapter VII, Exercise 10]{Hel}. By using now Theorem \ref{dirac}, 
we obtain Theorem \ref{theo11}. 
  
\medskip

We now  consider cyclic homogeneous Riemannian manifolds $G/K$ with $G$ semi\-sim\-ple noncompact. More precisely, let $\pi\colon M = G/K\to N = G/L$ be a transversally symmetric fibration of noncompact type, with $K$ compact and connected. Consider the maximal compact subgroup
$L$ of $G$ such that $K\subset L$ and let $\mathfrak{g}=\mathfrak{l}  \oplus \mathfrak{p}$ be the Cartan decomposition,
where $\mathfrak{l} $ is the Lie algebra of $L$. Let $\mathfrak{m} $ be the direct sum $\mathfrak{m}  = \mathfrak{f}
\oplus \mathfrak{p}$, $\mathfrak{f} $ being the $B$-orthogonal complement to the Lie algebra $\mathfrak{k}$ of $K$
in $\mathfrak{l}$. Then, we have 
\begin{proposition}
\label{noncomsemsim} 
The homogeneous manifold $M = G/K$ admits a cyclic metric adap\-ted to the transversally symmetric fibration $\pi\colon M = G/K\to N = G/L$ if and only if the pair $(L,K)$ is associated to an orthogonal symmetric Lie algebra $(\mathfrak{l},s)$.
\end{proposition}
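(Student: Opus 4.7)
The plan is to combine Lemma \ref{ldes} and Proposition \ref{psemisimple} with the structural constraints coming from the Cartan decomposition. An adapted $\mathrm{Ad}(K)$-invariant inner product on $\mathfrak{m}=\mathfrak{f}\oplus\mathfrak{p}$ admits, by Lemma \ref{ldes}, a refinement $\mathfrak{m}=\mathfrak{f}_1\oplus\cdots\oplus\mathfrak{f}_r\oplus\mathfrak{p}_1\oplus\cdots\oplus\mathfrak{p}_s$ into $\mathrm{Ad}(K)$-irreducible pieces, together with a block-diagonal expression \eqref{inner} of the form $\sum_a\lambda_aB|_{\mathfrak{f}_a}+\sum_b\lambda_{r+b}B|_{\mathfrak{p}_b}$, where $\lambda_1,\dots,\lambda_r<0$ and $\lambda_{r+1},\dots,\lambda_{r+s}>0$. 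Since $G$ is semisimple, hence unimodular, cyclicity coincides with traceless cyclicity, and by Proposition \ref{psemisimple} the metric is cyclic if and only if the equations \eqref{ccc} hold for all structure constants $c^{k_c}_{i_aj_b}$ of the basis adapted to this refinement.

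I would then read off the constraints imposed by the Cartan relations $[\mathfrak{l},\mathfrak{l}]\subset\mathfrak{l}$, $[\mathfrak{l},\mathfrak{p}]\subset\mathfrak{p}$ and $[\mathfrak{p},\mathfrak{p}]\subset\mathfrak{l}$, together with $\mathfrak{l}=\mathfrak{k}\oplus\mathfrak{f}$. This splits the possibly nonzero $c^{k_c}_{i_aj_b}$ into three cases: (i) $a,b,c\in\{1,\dots,r\}$, coming from $[\mathfrak{f},\mathfrak{f}]_{\mathfrak{m}}\subset\mathfrak{f}$; (ii) one index in $\{1,\dots,r\}$ and two in $\{r+1,\dots,r+s\}$, from $[\mathfrak{f},\mathfrak{p}]\subset\mathfrak{p}$; and (iii) two indices in $\{r+1,\dots,r+s\}$ and one in $\{1,\dots,r\}$, from $[\mathfrak{p},\mathfrak{p}]_{\mathfrak{m}}\subset\mathfrak{f}$.

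For the necessity direction, the crucial observation is the sign rigidity of case (i): all three coefficients $\lambda_a,\lambda_b,\lambda_c$ are strictly negative, so their sum is strictly negative and \eqref{ccc} forces $c^{k_c}_{i_aj_b}=0$ throughout case (i). This is precisely $[\mathfrak{f},\mathfrak{f}]_{\mathfrak{m}}=0$, i.e.\ $[\mathfrak{f},\mathfrak{f}]\subset\mathfrak{k}$, which together with the Jacobi identity and $[\mathfrak{k},\mathfrak{f}]\subset\mathfrak{f}$ says that $(\mathfrak{l},s)$ is an orthogonal symmetric Lie algebra whose associated pair is $(L,K)$. This is the only step where there is any work to do; cases (ii) and (iii) do not produce an obstruction because they mix positive and negative $\lambda$'s.

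For sufficiency, assuming $(L,K)$ is associated to an orthogonal symmetric Lie algebra so $[\mathfrak{f},\mathfrak{f}]\subset\mathfrak{k}$, case (i) is vacuous, and it suffices to exhibit one choice of the $\lambda$'s making \eqref{ccc} hold in cases (ii) and (iii). The uniform choice $\lambda_{r+b}=\mu>0$ on $\mathfrak{p}$ and $\lambda_a=-2\mu$ on $\mathfrak{f}$ yields $\lambda_a+\lambda_{r+b}+\lambda_{r+c}=0$ in case (ii) and $\lambda_{r+a}+\lambda_{r+b}+\lambda_c=0$ in case (iii), so \eqref{ccc} is satisfied identically and the resulting adapted metric is cyclic. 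The main (minor) obstacle is the sign bookkeeping in case (i); everything else is a direct check.
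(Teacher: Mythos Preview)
Your proof is correct and follows essentially the same approach as the paper: both use Lemma~\ref{ldes} together with the sign constraint $\lambda_a<0$ on $\mathfrak{f}$ to force $[\mathfrak{f},\mathfrak{f}]_{\mathfrak{m}}=0$ for the necessity direction, and both exhibit the adapted inner product $-2B|_{\mathfrak{f}\times\mathfrak{f}}+B|_{\mathfrak{p}\times\mathfrak{p}}$ for sufficiency. The only cosmetic difference is that the paper verifies the cyclic sum directly for the converse (invoking the $\mathrm{Ad}(G)$-invariance of $B$ in the mixed case $X,Y\in\mathfrak{p}$, $Z\in\mathfrak{f}$), while you route everything through Proposition~\ref{psemisimple}; also note that your cases (ii) and (iii) describe the same index pattern, so they are really one case.
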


\begin{proof} 
Because $B$ is strictly definite on $\mathfrak{l} $ $(B<0)$ and on $\mathfrak{p}$ $(B>0)$, there are
$B$-orthogonal decompositions $\mathfrak{g}=\mathfrak{l}  \oplus \mathfrak{p}$, $\mathfrak{l} =\mathfrak{k}
\oplus \mathfrak{f} $,
    $\mathfrak{m} =\mathfrak{f}  \oplus \mathfrak{p}$ and $\mathfrak{g}=\mathfrak{k} \oplus \mathfrak{m} $, where
\[
      [\mathfrak{l} ,\mathfrak{p}]\subset \mathfrak{p}, \quad [\mathfrak{k},\mathfrak{f} ]\subset \mathfrak{f} ,
\quad [\mathfrak{k},\mathfrak{m} ]\subset\mathfrak{m} .
    \]
We also have $[\mathfrak{f} ,\mathfrak{p}]\subset\mathfrak{p}$ and $[\mathfrak{f} ,\mathfrak{f} ]_{\mathfrak{m} }\subset\mathfrak{f} $.
Since $(G,L)$ is a symmetric pair, one has $[\mathfrak{p},\mathfrak{p}]\subset\mathfrak{l} $, and hence
\[
       [\mathfrak{p},\mathfrak{p}]_{\mathfrak{m} } \subset (\mathfrak{k} \oplus \mathfrak{f} )_{\mathfrak{m} } =\mathfrak{f} .
   \]

Suppose first that there exists a cyclic metric on $G/K$ with respect to
$\mathfrak{g}=\mathfrak{k} \oplus \mathfrak{m} $ which
is defined by an $\mathrm{Ad}(K)$-invariant  inner product $\langle\cdot,\cdot\rangle$  on $\mathfrak{m} $ making
orthogonal $\mathfrak{f} $ and $\mathfrak{p}$. From Lemma \ref{ldes}, $\langle\cdot,\cdot\rangle$ takes the form
(\ref{inner}). Because $[\mathfrak{f} ,\mathfrak{f} ]_{\mathfrak{m} }\subset\mathfrak{f} $, equation (\ref{ccc})
implies that $[\mathfrak{f} ,\mathfrak{f} ]_{\mathfrak{m} } = 0$, that is, $[\mathfrak{f} ,\mathfrak{f} ]\subset
\mathfrak{k}$. Hence $(\mathfrak{l},s)$ is an orthogonal symmetric Lie algebra,
where $s$ is the involutive automorphism of $\mathfrak{l}$ with eigenspaces $\mathfrak{k}$ and $\mathfrak{f}$ for the eigenvalues $+1$ and
$-1$, respectively and, since $L$ is connected, $(L,K)$ is a pair associated to it.

Conversely, if
$(\mathfrak{l},s)$ is an orthogonal symmetric Lie algebra associated to $(L,K)$, 
one has $[\mathfrak{f} ,\mathfrak{f} ]\subset\mathfrak{k}$
and hence $[\mathfrak{f} ,\mathfrak{f} ]_{\mathfrak{m} }=0$. So if $\langle\cdot,\cdot\rangle$ is an
inner product on $\mathfrak{m} $ we have $\mathfrak{S}_{XYZ}\langle[X,Y]_\mathfrak{m} ,Z\rangle=0$
for every $X,Y,Z\in \mathfrak{f} $.

Moreover, since the subspaces $\mathfrak{f} $ and $\mathfrak{p}$ of $\mathfrak{m} $ are invariant by the adjoint representation 
  $\mathrm{Ad}\colon K \rightarrow \mathrm{Aut}(\mathfrak{m} )$, the inner product $\langle\cdot,\cdot\rangle$
on $\mathfrak{m} $ making $\mathfrak{f} $ and $\mathfrak{p}$ orthogonal and given by
\[
  \langle\cdot,\cdot\rangle =-2\,B_{|\mathfrak{f} \times\mathfrak{f} } +
    \,B_{|\mathfrak{p}\times\mathfrak{p}}
  \]
 is $\mathrm{Ad}(K)$-invariant. Hence, we
also have $\mathfrak{S}_{XYZ}\langle[X,Y]_\mathfrak{m} ,Z\rangle=0$ if either $X,Y\in\mathfrak{f} $ and $Z\in\mathfrak{p}$ (because
$[\mathfrak{f} ,\mathfrak{f} ]_{\mathfrak{m} }=0$ and $[\mathfrak{f} ,\mathfrak{p}]\subset\mathfrak{p}$) or if $X,Y,Z\in \mathfrak{p}$
(as $[\mathfrak{p},\mathfrak{p}]_{\mathfrak{m} } \subset \mathfrak{f} $).

Finally, if $X,Y\in \mathfrak{p}$ and $Z\in \mathfrak{f} $, using that $B$ is $\mathrm{Ad}(G)$-invariant, one gets
  \begin{align*}
   \raisebox{-0.2ex}{\Large$\mathfrak{S}$}_{XYZ}\langle[X,Y]_\mathfrak{m} ,Z\rangle & =
-2B([X,Y]_\mathfrak{m} ,Z) + B([Y,Z]_\mathfrak{m} ,X)+B([Z,X]_\mathfrak{m} ,Y) \\
          & =-2B([X,Y],Z) + B([Y,Z],X)+B([Z,X],Y) = 0. \qedhere
   \end{align*}
\end{proof}
\begin{remark} \em
\label{simplyconnected} 
Since $G/L$ is contractible,
the exact homotopy sequence of the fibration~$\pi$,
\[
    \dotsb \rightarrow  \pi_2(G/L) \rightarrow  \pi_1(L/K)  \rightarrow   \pi_1(G/K) \rightarrow
    \pi_1(G/L),
\]
shows that
 $L/K$ is simply connected if and only if $M= G/K$ is so. 
\end{remark}
The fibre $F_{x}$ through an arbitrary point $x=\tau_{a}(o)$, for some $a\in G$, is given by $F_{x} = \tau_{a}(F)$, where $F$ is the fibre type $F = L/K$. With respect to an adapted metric, the fibres are totally geodesic but the corresponding foliation is not necessarily Riemannian (see \cite{Gon} for more details). From Proposition \ref{noncomsemsim}, the existence of adapted cyclic metrics on $M$ implies that $F$ is locally symmetric and, from Remark
\ref{simplyconnected}, it is globally symmetric if $M$ is simply connected.  Moreover, due to the connectedness of $K$, also $F$ is globally symmetric if the orthogonal symmetric Lie algebra $(\mathfrak{l},s)$ is of compact type, that is, $\mathfrak{l}$~is semisimple. 

In order to obtain Tables
$1$, $2$ and $3$, we shall need the following lemma.

\begin{lemma}
\label{lclassification}
Let $(\mathfrak{l} ,s)$ be an orthogonal symmetric Lie algebra with
$\mathfrak{l}$ a compact Lie algebra. Let $\mathfrak{k}$ and $\mathfrak{f} $ be
the eigenspaces of $s$ for the eigenvalues $+1$ and $-1$, respectively. Suppose that the dimension
of the center $\mathfrak{z}(\mathfrak{l} )$ of $\mathfrak{l} $ is less than or equal to  one. If the Lie
algebra $\mathrm{ad}_{\mathfrak{l} }(\mathfrak{k})$ acts irreducibly on $\mathfrak{f}$,
$(\mathfrak{l} ,s)$ belongs to one of the following classes:

\begin{enumerate}[{\rm(i)}]
\item {\rm Type S$1:$} $(\mathfrak{l} ,s)$ is an irreducible orthogonal symmetric Lie algebra$;$
\item {\rm Type S$2:$} $\mathfrak{l} $ is semisimple and there exists a proper ideal
$\tilde{\mathfrak{l} }$ of $\mathfrak{l} $ such that the pair $(\tilde{\mathfrak{l} },\tilde{s})$,
where $\tilde{s}$ is the restriction of $s$ to $\tilde{\mathfrak{l} }$, is an irreducible
orthogonal symmetric Lie algebra and $\mathfrak{f} $ is the $-1$-eigenspace of $\tilde{s};$
\item {\rm Type NS$0:$} $\mathfrak{l} $ is not semisimple and $(\mathfrak{l} ,s)$ is effective. Then $\mathfrak{f} =\mathfrak{z}(\mathfrak{l})$ and so, $(\mathfrak{l} ,s)$ is of Euclidean type$;$
\item {\rm Type NS$1:$} $\mathfrak{l} $ is not semisimple, $(\mathfrak{l} ,s)$ is not effective and
$({\mathfrak l}_{-},s_{-})$ is an irreducible orthogonal symmetric Lie algebra, where $\mathfrak{l} _{-}$
is the semisimple ideal $[\mathfrak{l} ,\mathfrak{l} ]$ of $\mathfrak{l} ;$
\item {\rm Type NS$2:$} $\mathfrak{l} $ is not semisimple, $(\mathfrak{l} ,s)$ is not effective and there
exists a proper ideal $\tilde{\mathfrak{l} }_{-}$ of $\mathfrak{l} _{-}$ such that the pair
$(\tilde{\mathfrak{l} }_{-},\tilde{s}_{-})$, where $\tilde{s}_{-}$ is the restriction of $s$ to
$\tilde{\mathfrak{l} }_{-}$, is an irreducible orthogonal symmetric Lie algebra and $\mathfrak{f} $
is the $-1$-eigenspace of $\tilde{s}_{-}$.
\end{enumerate}
\end{lemma}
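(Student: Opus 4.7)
The approach is a case analysis on whether $\mathfrak{l}$ is semisimple, exploiting the hypothesis $\dim\mathfrak{z}(\mathfrak{l})\leqslant 1$ to constrain the non-semisimple situation. The key structural facts are that, on a compact Lie algebra, $s$ preserves both the center $\mathfrak{z}(\mathfrak{l})$ and the derived ideal $[\mathfrak{l},\mathfrak{l}]$ (each is characteristic), and that on a compact semisimple Lie algebra $s$ permutes the simple ideals.

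First, in the semisimple case I would write $\mathfrak{l}=\bigoplus_{i}\mathfrak{l}_{i}$ with each $\mathfrak{l}_{i}$ simple and group the $\mathfrak{l}_{i}$ into $s$-orbits of length one (simple ideals fixed setwise by $s$) and length two (pairs swapped by $s$). Each orbit spans an $s$-invariant ideal which, with the restriction of $s$, is either an irreducible orthogonal symmetric Lie algebra of type I (single ideal with non-trivial involution), an irreducible orthogonal symmetric Lie algebra of type II (swapped pair), or contributes trivially to $\mathfrak{f}$ (single ideal fixed pointwise). Accordingly $\mathfrak{f}=\bigoplus_{i}\mathfrak{f}_{i}$; because blocks from distinct orbits commute, each $\mathfrak{f}_{i}$ is $\mathrm{ad}_{\mathfrak{l}}(\mathfrak{k})$-invariant, and the irreducibility hypothesis forces exactly one $\mathfrak{f}_{i}$ to be non-zero. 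Its supporting ideal $\tilde{\mathfrak{l}}$ either coincides with $\mathfrak{l}$, giving Type S$1$, or is proper in $\mathfrak{l}$, giving Type S$2$.

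Next I would dispose of the non-semisimple case, where $\dim\mathfrak{z}(\mathfrak{l})=1$ and $s|_{\mathfrak{z}(\mathfrak{l})}=\pm\mathrm{id}$. If $s|_{\mathfrak{z}(\mathfrak{l})}=-\mathrm{id}$, then $\mathfrak{z}(\mathfrak{l})\subset\mathfrak{f}$ is a non-zero $\mathrm{ad}_{\mathfrak{l}}(\mathfrak{k})$-invariant subspace (the center being centralized by $\mathfrak{k}$), so irreducibility forces $\mathfrak{f}=\mathfrak{z}(\mathfrak{l})$; hence $[\mathfrak{f},\mathfrak{f}]=0$ (Euclidean type) and $\mathfrak{k}\cap\mathfrak{z}(\mathfrak{l})=0$ (effective), giving Type NS$0$. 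If instead $s|_{\mathfrak{z}(\mathfrak{l})}=+\mathrm{id}$, then $\mathfrak{z}(\mathfrak{l})\subset\mathfrak{k}$ is a non-zero ideal of $\mathfrak{l}$ sitting inside $\mathfrak{k}$, so $(\mathfrak{l},s)$ is not effective, and the $s$-invariant splitting $\mathfrak{l}=\mathfrak{z}(\mathfrak{l})\oplus\mathfrak{l}_{-}$ with $\mathfrak{l}_{-}=[\mathfrak{l},\mathfrak{l}]$ places $\mathfrak{f}$ inside $\mathfrak{l}_{-}$. Writing $s_{-}=s|_{\mathfrak{l}_{-}}$ and $\mathfrak{k}_{-}=\mathfrak{k}\cap\mathfrak{l}_{-}$, the pair $(\mathfrak{l}_{-},s_{-})$ is a compact semisimple orthogonal symmetric Lie algebra with $-1$-eigenspace $\mathfrak{f}$; since the center acts trivially, $\mathrm{ad}_{\mathfrak{l}_{-}}(\mathfrak{k}_{-})$ and $\mathrm{ad}_{\mathfrak{l}}(\mathfrak{k})$ induce the same action on $\mathfrak{f}$, which remains irreducible. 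Applying the semisimple analysis to $(\mathfrak{l}_{-},s_{-})$ produces Type NS$1$ or NS$2$ according as $(\mathfrak{l}_{-},s_{-})$ is itself irreducible or not.

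The main delicacy is in the semisimple case: one must track carefully the two kinds of irreducible blocks (a single simple ideal with non-trivial involution versus a swapped pair of simple ideals), so that the ideal $\tilde{\mathfrak{l}}$ appearing in Types S$2$ and NS$2$ is genuinely an ideal of $\mathfrak{l}$, possibly not itself simple. The remaining steps are straightforward bookkeeping with the hypothesis $\dim\mathfrak{z}(\mathfrak{l})\leqslant 1$ and the eigenspace decomposition of $s$.
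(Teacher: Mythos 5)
Your proposal is correct and follows essentially the same route as the paper: split on whether $\mathfrak{l}$ is semisimple, and in the non-semisimple case use the $s$-invariant splitting $\mathfrak{l}=\mathfrak{z}(\mathfrak{l})\oplus[\mathfrak{l},\mathfrak{l}]$ together with the irreducibility of $\mathfrak{f}$ to force either $\mathfrak{f}=\mathfrak{z}(\mathfrak{l})$ (the effective case, NS0) or $\mathfrak{f}\subset[\mathfrak{l},\mathfrak{l}]$ (reducing to the semisimple case, NS1/NS2). The only cosmetic difference is in the semisimple sub-case, where you isolate the unique $s$-orbit of simple ideals supporting $\mathfrak{f}$ while the paper takes the $B$-orthogonal complement of the maximal ideal of $\mathfrak{l}$ contained in $\mathfrak{k}$; these yield the same ideal $\tilde{\mathfrak{l}}$.
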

\begin{proof} 
First, suppose that $\mathfrak{l} $ is semisimple. Then the pair $(\mathfrak{l} ,s)$
is an orthogonal symmetric Lie algebra of compact type. When $\mathfrak{k}$ contains no  nonzero
ideal of $\mathfrak{l} $, $(\mathfrak{l} ,s)$ is irreducible, and it is of type S$1$.
Otherwise, let $\mathfrak{u}$ be the maximal ideal of $\mathfrak{l} $ contained in $\mathfrak{k}$ and let
$\tilde{\mathfrak{l} }$ be the semisimple compact ideal, orthogonal complement of $\mathfrak{u}$ with respect
to $B$. Because $\mathfrak{u}\subset \mathfrak{k}$ and $B$ is invariant by $s$, it follows that $s$ preserves
$\tilde{\mathfrak{l} }$. Hence, the restriction $\tilde{s}$ of $s$ to $\tilde{\mathfrak{l} }$ is an involutive
automorphism of $\tilde{\mathfrak{l} }$. Since the set $\tilde{\mathfrak{k}} = \mathfrak{k}\cap \tilde{\mathfrak{l} }$
of fixed points of $\tilde{s}$ is a compactly embedded subalgebra of $\mathfrak{l} $, we have that
$(\tilde{\mathfrak{l} },\tilde{s})$ is an orthogonal symmetric Lie algebra, which is moreover by construction
irreducible and $\mathfrak{f} $ is the $-1$-eigenspace of $\tilde{s}$. Then, $(\mathfrak{l} ,s)$ is of type S$2$.

Next, suppose that $\mathfrak{l} $ is not semisimple. Then $\mathfrak{l} $ is the direct sum $\mathfrak{l}
=\mathfrak{l} _{0}\oplus \mathfrak{l} _{-}$, where $\mathfrak{l} _{0}$ and $\mathfrak{l} _{-}$ are the ideals
$\mathfrak{z}(\mathfrak{l} )$ and $[\mathfrak{l} ,\mathfrak{l} ]$, respectively. These subspaces are invariant
under $s$ and orthogonal with respect to the Killing form $B$ of $\mathfrak{l} $. Then $(\mathfrak{l} _{0},s_{0})$
and $(\mathfrak{l} _{-},s_{-})$ are orthogonal symmetric Lie algebras, where $s_{0}$ and $s_{-}$ denote the
restrictions of $s$ to $\mathfrak{l} _{0}$ and $\mathfrak{l} _{-}$, respectively \cite[Chapter  V, Theorem 1.1]{Hel}.
Let $\mathfrak{l} _{0} = \mathfrak{k}_{0}\oplus \mathfrak{f} _{0}$ and $\mathfrak{l} _{-} =
\mathfrak{k}_{-}\oplus\mathfrak{f} _{-}$ be the decomposition of $\mathfrak{l} _{0}$ and $\mathfrak{l} _{-}$
into the corresponding $\pm 1$-eigenspaces of $s_{0}$ and $s_{-}$. Then the subspaces $\mathfrak{k}_{0}$
and $\mathfrak{k}_{-}$ are ideals in $\mathfrak{k}$, orthogonal with respect to $B$, and $\mathfrak{k} =
{\mathfrak k}_{0}\oplus \mathfrak{k}_{-}$. Because the subspaces $\mathfrak{f} _{0}$ and $\mathfrak{f} _{-}$
of $\mathfrak{f} $ are $\mathrm{ad}_{\mathfrak{l} }(\mathfrak{k})$-invariant, one of them must be trivial. If
$(\mathfrak{l} ,s)$ is effective, that is, $(\mathfrak{l} ,s)$ is of type NS$0$, one gets that $\mathfrak{k}_{0}
=\{0\}$ and $\mathfrak{l} _{0} = \mathfrak{f} _{0}$. Hence, $\mathfrak{f} _{-}$ is trivial and so,
$\mathfrak{l} _{0} = \mathfrak{f} $ and $\dim {\mathfrak l}_{0} = 1$.

If $(\mathfrak{l} ,s)$ is not effective, then $\mathfrak{l} _{0} = \mathfrak{k}_{0}$ and so, $\mathfrak{f} _{0}
 = \{0\}$. Hence, the types NS$1$ and NS$2$ for $(\mathfrak{l} ,s)$ correspond with the types
S$1$ and S$2$ for the orthogonal symmetric Lie algebra $(\mathfrak{l} _{-},s_{-})$
(of compact type), respectively. Finally, the case (v), for the type NS2, is proved using the same proof as for the case (ii).
\end{proof}

\smallskip

Given an irreducible Riemannian symmetric space of noncompact type $G/L$, where $L$ is a maximal compact subgroup of $G$, we consider orthogonal symmetric Lie algebras $(\mathfrak{l},s)$, where $\mathfrak{l}$ is the Lie algebra of $L$, satisfying the following two conditions:
\begin{enumerate}
\item[{\rm (i)}] the algebra $\mathrm{ad}_{\mathfrak{l} }(\mathfrak{k})$ acts irreducibly on $\mathfrak{f}$, where $\mathfrak{k}$ and $\mathfrak{f}$ are the eigenspaces of $s$ for the eigenvalues $+1$ and $-1$, respectively;
\item[{\rm (ii)}] the connected subgroup $K$ of $L$, with Lie algebra $\mathfrak{k}$, is closed (this is the case if $\mathfrak{l}$ is semisimple).
\end{enumerate}
Then, from Proposition \ref{noncomsemsim}, the homogeneous manifold $G/K$ admits a cyclic metric adapted to the transversally symmetric fibration $\pi\colon G/K\to G/L$ and the fibre type $F = L/K$ is isotropy-irreducible. Since $(\mathfrak{l},s)$ satisfies the hypothesis of Lemma \ref{lclassification}, we can give the next
\begin{definition}
\label{deftype}
We say that the cyclic homogeneous Riemannian manifold $G/K$ has {\it irreducible fibre of type} S1, S2, NS0, NS1 {\it or\/} NS2 if $(\mathfrak{l},s)$ 
is of that type.
\end{definition}

All these cyclic homogeneous Riemannian manifolds $G/K$ with irre\-du\-ci\-ble fibre are listed as the pairs $(G,K)$ on the first and third columns of Tables $1$ and $2$ or the corresponding pairs of Lie algebras $(\mathfrak{g}, \mathfrak{k})$ in Table $3$. Note that all the connected subgroups $K$ in Tables $1$ and $2$ are compact. 

As for the simply connectedness of the manifolds $L/K$ (equivalenty, $G/K$) in Table $1$ for the non-semisimple Lie groups $L=U(n)$ and $L=S(U(p)\times U(q))$ when $L/K$ is not a circle, we give the next

\begin{remark} 
Consider the universal covering maps $\widetilde{\pi}\colon\widetilde{L}=\mathbb{R}\times SU(n)\rightarrow L=U(n)$ and $\widetilde{\rho}\colon \widetilde{L}\rightarrow U(1)\times SU(n)$, given by
$\widetilde{\pi}(t,A)=\mathrm{e}^{\mathrm{i} t}A$ and $\widetilde{\rho}(t,A)=(\mathrm{e}^{\mathrm{i} t},A)$, and the covering map $\rho\colon (e^{\mathrm{i} t},A)\in U(1)\times SU(n) \rightarrow \mathrm{e}^{\mathrm{i} t}A\in U(n)$, so that $\rho\makebox[7pt]{\raisebox{1.5pt}{\scriptsize $\circ$}}\widetilde{\rho}=\widetilde{\pi}$.
The  subgroup of  $U(n)$ defined by the maximal Lie subalgebra $\mathfrak{u}(1)\oplus\mathfrak{so}(n)$ of $\mathfrak{u}(1)\oplus\mathfrak{su}(n)$ is the compact group $K=\rho(U(1)\times SO(n))$. Now,
\[
\mathrm{Ker}\, \widetilde{\pi}_{|\mathbb{R}\times SO(n)}=\begin{cases}
                    \{(l\pi,\mathrm{e}^{-\mathrm{i} l\pi}) : l\in \mathbb{Z}\} & \textrm{ if } n \textrm{ even} \\
                     \{(2l\pi,\mathrm{e}^{-\mathrm{i} 2 l\pi}) : l\in \mathbb{Z}\} & \textrm{ if } n \textrm{ odd},
                     \end{cases}
                     \]
then
\[
\mathrm{Ker}\, \rho_{|U(1)\times SO(n)} =
 \widetilde{\rho}(\mathrm{Ker}\, \widetilde{\pi}_{|\mathbb{R}\times SO(n)})=
 \begin{cases}
      \{(1,I_n),(-1,-I_n)\} & \textrm{ if }  n \textrm{  even}\\
      \{(1,I_n)\}        & \textrm{ if }  n \textrm{ odd},
      \end{cases}
 \]
that is,
\[
            K=\rho(U(1)\times SO(n))=
            \begin{cases}
                        (U(1)\times SO(n))/\mathbb{Z}_2  & \textrm{ if }  n \textrm{ even}\\
                        U(1)\times SO(n)  & \textrm{ if } n \textrm{ odd}.
            \end{cases}
   \]
   If $\widetilde{K}=\widetilde{\pi}^{-1}(K)$, then $L/K$ and $\widetilde{L}/\widetilde{K}$ are diffeomorphic, and since $\widetilde L$ is connected and simply connected, the cardinal of $\pi_1(\widetilde{L}/\widetilde{K})$ coincides with the number of connected components of $\widetilde{K}$. Now, it is easy to see that
\[
  \widetilde{K}= \big(\mathbb{R}\times SO(n)\big) \cup \big(\mathbb{R}\times \mathrm{e}^{\mathrm{i} 2\pi/n} SO(n)\big) \cup \dots \cup \big(\mathbb{R}\times \mathrm{e}^{\mathrm{i} 2(n-1)\pi/n} SO(n)\big), 
\]
and $\widetilde{K}$ has either $n$ connected components (if $n$ is odd)  or  $n/2$ connected components (if $n$ is even). In particular, $L/K=U(n)/\rho(U(1)\times SO(n))$ is simply connected if and only if $n=2$.
Analogously, one obtains that $U(n)/((U(1)\times Sp(n/2)/\mathbb{Z}_2)$ is not simply connected for $n$ even $\geqslant 4$. On the other hand, the complex Grassmannian  $U(n)/(U(i)\times U(j))\approx SU(n)/S(U(i)\times U(j))$ is simply connected, and it is a homogeneous spin Riemannian symmetric spin space for $n=i+j$ even~\cite{{CahGut}}.

   In a similar way, if $L=S(U(p)\times U(q))$, $p\geqslant q\geqslant 1$, we consider its universal cover $\widetilde{L}=\mathbb{R}\times SU(p)\times SU(q)$ under the map  $\widetilde{\pi}\colon \widetilde{L}\rightarrow L$ given by
   $\widetilde{\pi}(t,A,B)=\mathrm{diag}(\mathrm{e}^{\mathrm{i} qt} A,\mathrm{e}^{-\mathrm{i} pt} B)$, which defines the covering map
  \[
  \rho\colon (\mathrm{e}^{\mathrm{i} t},A,B)\in U(1)\times SU(p)\times SU(q)\rightarrow
        \left(
\begin{array}{cc}
\mathrm{e}^{\mathrm{i} qt} A & 0 \\
\noalign{\smallskip}
0 & \mathrm{e}^{-\mathrm{i} pt} B
\end{array}
\right )
  \in L,
  \]
  and this can be used to obtain the (compact) subgroups, among others, of $L$ defined by the maximal compact subalgebras
   $\mathfrak{u}(1)\oplus \mathfrak{su}(p)\oplus \mathfrak{so}(q)$, 
   $\mathfrak{u}(1)\oplus \mathfrak{su}(p)\oplus \mathfrak{sp}(q/2)$ (for $q$ even) 
  and $\mathfrak{u}(1)\oplus\Delta\mathfrak{su}(p)$ (for $p=q$)
  of $\mathfrak{u}(1)\oplus \mathfrak{su}(p)\oplus \mathfrak{su}(q)$.
   We can also see via the nonconnectedness of $\widetilde{K}=\widetilde{\pi}^{-1}(K)$ that if 
   $K=\rho(U(1)\times SU(p)\times SO(q))$ (for $q\geqslant 3$),
   or $\rho(U(1)\times SU(p)\times Sp(q/2))$ (for $q$ even $\geqslant 4$),
  or $\rho(U(1)\times \Delta SU(p))$ (for $p=q\geqslant 3$),  then $L/K$ is not simply connected. Moreover, if $K=S(U(i)\times U(j)\times U(k))$ (for $i+j=p$, $k=q$, or $i=p$, $j+k=q$), then $\widetilde{\pi}^{-1}(K)$  is the connected subgroup $\mathbb{R}\times S(U(i)\times U(j))\times SU(q)$
 or  $\mathbb{R}\times SU(p)\times S(U(j)\times U(k))$ of $\widetilde{L}$ and so in this case $L/K$ is simply connected. 
\end{remark}

\section{Homogeneous spin structures on transversally symmetric fibrations of noncompact type}
\label{secfiv}

Let $\xi = (E,\pi,B;F)$ be a fibration and let $\tau_F$ denote the bundle along the fibres of $\xi$.
The quotient of the tangent bundle $T(E)$
of the total space $E$ of $\xi$ by $\tau_F$ is equivalent \cite[Proposition  7.6]{BorHir} to the
fibration induced by $\pi$ from the tangent bundle of the base space $B$, so $T(E)$ decomposes as the Whitney sum
\begin{equation}
\label{tetfptb}
T(E) = \tau_F \oplus \pi^*T(B)
\end{equation}
and hence the corresponding total Stiefel-Whitney classes satisfy $w(T(E)) = w(\tau_F)$ $w(\pi^*T(B))$. Thus,
\begin{equation}
\label{w1w2}
\begin{split}
& \hspace{17mm} w_1(T(E)) = w_1(\tau_F) + w_1(\pi^*T(B)) , \\
& w_2(T(E)) = w_2(\tau_F) + w_1(\tau_F) w_1(\pi^*T(B)) + w_2(\pi^*T(B)) .
\end{split}
\end{equation}

In our case, the homogeneous fibration $L/K \overset{i}{\hookrightarrow} G/K \overset{\pi}{\to} G/L$ induces the fibration 
$T(L/K) \overset{i_*}{\hookrightarrow} T(G/K) \overset{\pi_*}{\to} T(G/L)$ and we have the following result.

\smallskip

\begin{proposition}
\label{conspi}
Let $\pi\colon M = G/K \to N = G/L$ be a transversally symmetric fibration of noncompact type. Then we have:
\begin{enumerate}
\item[{\rm (i)}] $M$ is a spin manifold if and only if the fibre type $F=L/K$ is so$;$
\item[{\rm (ii)}] if $F$ is simply connected then $M$ is a homogeneous spin Riemannian manifold if and only if $F$ is
also a homogeneous spin Riemannian manifold.
\end{enumerate}
\end{proposition}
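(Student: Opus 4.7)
\smallskip
\noindent \textbf{Proof plan.} The strategy is to exploit the contractibility of the base $N = G/L$ (Remark \ref{pnon-comp}) in order to reduce the Stiefel--Whitney class computation on $M$ to one on the fibre $F$, and then feed the conclusion into Proposition \ref{psimple}.

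For part (i), I would start from the Whitney sum decomposition \eqref{tetfptb} and the resulting identities \eqref{w1w2}. Since $G/L$ is a Riemannian symmetric space of noncompact type, it is diffeomorphic to a Euclidean space and so $T(N)$ is a trivial bundle; pulling back by $\pi$, the summand $\pi^{*}T(N)$ is trivial as well, so $w_{1}(\pi^{*}T(N))=w_{2}(\pi^{*}T(N))=0$. The formulas \eqref{w1w2} then collapse to $w_{j}(T(M))=w_{j}(\tau_{F})$ for $j=1,2$. To relate $w_{j}(\tau_{F})$ to $w_{j}(T(F))$, I would observe that the contractibility of $N$ makes the inclusion $i\colon F \hookrightarrow M$ of any fibre a homotopy equivalence, so $i^{*}\colon H^{*}(M;\mathbb{Z}_{2})\to H^{*}(F;\mathbb{Z}_{2})$ is an isomorphism; since $i^{*}\tau_{F}=T(F)$, naturality of Stiefel--Whitney classes gives $w_{j}(T(M))=0$ if and only if $w_{j}(T(F))=0$, which is the content of (i). (Equivalently, one may note that the triviality of the principal $L$-bundle $G\to G/L$ yields a diffeomorphism $M\cong F\times N$ under which $\tau_{F}$ and $\pi^{*}T(N)$ correspond to the pullbacks of $T(F)$ and $T(N)$ via the two projections, and then conclude by Künneth.)

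For part (ii), I would combine (i) with Proposition \ref{psimple} and Remark \ref{simplyconnected}. The exact homotopy sequence in Remark \ref{simplyconnected} shows that under the hypothesis that $F$ is simply connected, $M$ is simply connected as well. Assume first that $F$ is a homogeneous spin Riemannian manifold; then $F$ admits a spin structure, hence $F$ is spin, and by (i) so is $M$. Being simply connected, $M$ falls under Proposition \ref{psimple} and therefore is a homogeneous spin Riemannian manifold with adapted quotient $\widetilde{G}/\widetilde{K}$. The converse is symmetric: if $M$ is a homogeneous spin Riemannian manifold, then $M$ is spin, hence by (i) so is $F$, and applying Proposition \ref{psimple} to the simply connected homogeneous Riemannian manifold $F=L/K$ produces a homogeneous spin structure with adapted quotient $\widetilde{L}/\widetilde{K}$.

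The only delicate point in the argument is the identification, in part (i), of $w_{j}(\tau_{F})$ with $w_{j}(T(F))$: $\tau_{F}$ lives on $M$ while $T(F)$ lives on $F$, and a careful invocation either of the fibre-inclusion homotopy equivalence or of the global product structure $M\cong F\times N$ is needed to bridge the two. Once that is in place, part (ii) is a formal consequence of what has already been proved in the paper.
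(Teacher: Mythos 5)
Your proposal is correct and follows essentially the same route as the paper: the Whitney sum decomposition \eqref{tetfptb} with $w_{1}=w_{2}=0$ for $\pi^{*}T(G/L)$ (since $G/L$ is contractible) for part (i), and Remark \ref{simplyconnected} together with Proposition \ref{psimple} for part (ii). Your explicit justification that $w_{j}(\tau_{F})$ may be identified with $w_{j}(T(F))$ via the fibre inclusion being a homotopy equivalence is a detail the paper leaves implicit, but it does not change the argument.
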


\begin{proof} 
(i) We have $w_1(G/L)=0$ and $w_2(G/L)=0$. Furthermore, formula \eqref{tetfptb} is here
\[
T(G/K) =  i_*(T(L/K)) \oplus \pi^*(T(G/L))
\]
and $w_1\big(\pi^*T(G/L)\big)=0$, $w_2\big(\pi^*T(G/L)\big)=0$. It follows from \eqref{w1w2} that
$w_1(G/K)$ $=0$ if and only if $w_1(L/K) = 0$, and $w_2(G/K)=0$ if and only if $w_2(L/K) = 0$.

\smallskip

(ii) follows by using Remark \ref{simplyconnected}, Proposition \ref{psimple} and (i).
\end{proof}

\smallskip

Next, we show Theorem \ref{conspi1}, which generalizes Proposition \ref{conspi},(ii) when the fibre type $F$ is not necessarily simply connected.

\medskip

\noindent {\sf Proof of Theorem \ref{conspi1}.} From Proposition \ref{psimple} and Remark \ref{pnon-comp}, $N$ is a homogeneous spin Riemannian manifold, $\widetilde{G}/\widetilde{L}$ is an adapted quotient expression and $\mathfrak{g} = \mathfrak{l}\oplus \mathfrak{p}$ is an adapted reductive decomposition. 

Suppose that $(M = G/K,g)$ is a homogeneous spin Riemannian manifold, where $g$ is a metric adapted to the homogeneous fibration and $\widetilde{G}/\widetilde{K}$ and $\mathfrak{g} = \mathfrak{k}\oplus \mathfrak{m}$ are an adapted quotient expression and an adapted reductive decomposition for the homogeneous spin structure. Since $\mathfrak{m}$ and $\mathfrak{p}$ are $\mathrm{Ad}^{\widetilde{G}}(\widetilde{K})$-invariant subspaces of $\mathfrak{g}$ and $\mathfrak{f}$ and $\mathfrak{p}$ are $B$-orthogonal in $\mathfrak{m}$, we have that $\mathfrak{f}$ must also be $\mathrm{Ad}^{\widetilde{G}}(\widetilde{K})$-invariant. Moreover, the restriction $\langle\cdot,\cdot\rangle_{\mathfrak{f}}$ to $\mathfrak{f}$ of the $\mathrm{Ad}^{\widetilde{G}}(\widetilde{K})$-invariant inner product $\langle\cdot,\cdot\rangle$ on $\mathfrak{m}$ which gives the adapted metric $g$ on $M$, is also $\mathrm{Ad}^{\widetilde{G}}(\widetilde{K})$-invariant. Applying Lemma \ref{luniversal}, $F$ can be expressed as the homogeneous Riemannian manifold $\widetilde{L}/\widetilde{K}$.  From the exact homotopy sequence of the fibration $\widetilde{L} \to \widetilde{G} \to  N=\widetilde{G}/\widetilde{L}$,
\[
  \dotsb \to \pi_2(N) \to \pi_1(\widetilde{L}) \to \pi_1(\widetilde{G}) \to
      \pi_1(N) \to \pi_0(\widetilde{L}) \to \pi_0(\widetilde{G}),
\]
one has that $\widetilde{L}$ is connected and simply connected. Then, since from Proposition \ref{conspi}(i), $F$ admits a spin structure, the result follows from Lemma \ref{lBar}.

Next, we shall prove the converse. Because $\mathfrak{f}$ and $\mathfrak{p}$ are $\mathrm{Ad}^{\widetilde{G}}(\widetilde{K})$-invariant subspaces of $\mathfrak{g}$, one gets that $\mathfrak{m} = \mathfrak{f}\oplus \mathfrak{p}$ is an  $\mathrm{Ad}^{\widetilde{G}}(\widetilde{K})$-invariant decomposition of $\mathfrak{m}$. We consider on $\mathfrak{m}$ the inner product $\langle\cdot,\cdot\rangle$ such that $\langle \mathfrak{f},\mathfrak{p}\rangle = 0$. This inner product coincides on $\mathfrak{p}$ with the $\mathrm{Ad}^{\widetilde{G}}(\widetilde{L})$-invariant inner product $\langle\cdot,\cdot\rangle_{\mathfrak{p}}$, which determines the $\widetilde{G}$-invariant metric on $N$, and on $\mathfrak{f}$ with the $\mathrm{Ad}^{\widetilde{G}}(\widetilde{K})$-invariant inner product which determines the $\widetilde{L}$-invariant metric on $F$. Then $\langle\cdot,\cdot\rangle$ is $\mathrm{Ad}^{\widetilde{G}}(\widetilde{K})$-invariant and Lemma \ref{luniversal} implies that $(M,g)$, $g$ being the adapted $G$-invariant metric determined by $\langle\cdot,\cdot\rangle$ on $\mathfrak{m}$, is isometric to $(\widetilde{G}/\widetilde{K},\tilde{g})$.

From Proposition \ref{conspi},(i), $M$ is a spin manifold. Then, using Lemma \ref{lBar}, $(M,g)$ is in fact a homogeneous spin Riemannian manifold with adapted quotient expression $\widetilde{G}/\widetilde{K}$  and reductive decomposition $\mathfrak{g} = \mathfrak{k}\oplus \mathfrak{m}$. 
The last part of the theorem is a direct consequence of Theorem \ref{dirac} and Proposition \ref{noncomsemsim}. \hfill $\square$  

\smallskip
 
Applying Theorem \ref{conspi1} for $F = S^{1}$, we have
\begin{corollary} 
The total space $M = G/K$ of a transversally symmetric fibration of noncompact type and with irreducible symmetric fibre of type {\rm NS0} is a homogeneous spin Riemannian manifold with the simplest Dirac operator.
\end{corollary}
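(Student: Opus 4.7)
The plan is to reduce the statement to a direct application of Theorem \ref{conspi1} by identifying the fibre type $F$ as the circle $S^{1}$. First, I would invoke the characterization of type NS0 from Lemma \ref{lclassification} and, more precisely, from its proof: in this case $\mathfrak{l}$ is non-semisimple, $(\mathfrak{l},s)$ is effective, the decomposition $\mathfrak{l} = \mathfrak{z}(\mathfrak{l}) \oplus [\mathfrak{l},\mathfrak{l}]$ satisfies $\mathfrak{k}_{0} = \{0\}$ and $\mathfrak{f}_{-} = \{0\}$, and consequently $\mathfrak{f} = \mathfrak{z}(\mathfrak{l})$ with $\dim \mathfrak{z}(\mathfrak{l}) = 1$. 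Hence $\dim F = \dim \mathfrak{f} = 1$.

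Next, since we are in a transversally symmetric fibration of noncompact type with $G$ connected semisimple noncompact and $L$ a maximal compact subgroup, $L$ is compact and connected; together with the standing assumption that $K$ is connected, this forces $F = L/K$ to be a compact connected $1$-manifold, hence diffeomorphic to $S^{1}$. The circle is trivially a homogeneous spin Riemannian manifold (its tangent bundle is trivial, so $w_{1}(S^{1}) = w_{2}(S^{1}) = 0$, and the isotropy representation is trivial, so it lifts trivially to $\mathrm{Spin}(1)$), with adapted quotient expression $\widetilde{L}/\widetilde{K}$.

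With $F$ identified as a homogeneous spin Riemannian manifold, the first part of Theorem \ref{conspi1} immediately yields that $M = \widetilde{G}/\widetilde{K}$, equipped with the metric adapted to the fibration, is a homogeneous spin Riemannian manifold with adapted quotient $\widetilde{G}/\widetilde{K}$. For the second conclusion, the definition of type NS0 means precisely that $(L,K)$ is associated with the orthogonal symmetric Lie algebra $(\mathfrak{l},s)$ (of Euclidean type in this case); the final clause of Theorem \ref{conspi1} then gives that the Dirac operator on $M$ coincides with the form \eqref{d0ss1} of a Riemannian symmetric spin space, i.e.\ is the simplest one. There is no real obstacle here beyond extracting $\dim \mathfrak{f} = 1$ from the NS0 structure; the corollary is essentially a packaging of Theorem \ref{conspi1} in the smallest nontrivial fibre case.
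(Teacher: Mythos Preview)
Your proposal is correct and follows essentially the same approach as the paper: the paper's proof consists of the single line ``Applying Theorem \ref{conspi1} for $F = S^{1}$,'' and you have simply unpacked why type NS0 forces $F \cong S^{1}$ (via Lemma \ref{lclassification}(iii)) and why $S^{1}$ satisfies the hypotheses of Theorem \ref{conspi1}. The only minor slip is writing ``$M = \widetilde{G}/\widetilde{K}$'' rather than ``$M$ with adapted quotient $\widetilde{G}/\widetilde{K}$,'' but this does not affect the argument.
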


\section{Example: The nonsymmetric traceless cyclic homogeneous spin Riemannian manifold $\widetilde{SL(2,\mathbb{R})}$}
\label{secsix}

The universal covering $\widetilde{SL(2,\mathbb{R})}$ of the Lie group $SL(2,\mathbb{R})$
has nonsymmetric related pair $(\mathfrak{g},\mathfrak{k}) = (\mathfrak{sl}(2,\mathbb{R}),0)$.
This corresponds to the fifth manifold in Table $1$, for $p=q=1$, via the isomorphism between
$\mathfrak{sl}(2,\mathbb{R})$ and $\mathfrak{su}(1,1)$, the latter being isomorphic to the nonsymmetric pair
$(\mathfrak{su}(1,1), \mathfrak{su}(1) \oplus \mathfrak{su}(1)) = (\mathfrak{su}(1,1),0)$.

Consider the basis of $\mathfrak{sl}(2,\mathbb{R})$ given by
\[
X_1 = a
\Bigl( \begin{array}{cc} 0  & \;\, 1 \\
                           1  & \;\, 0 \end{array} \Bigr), \quad
X_2 =b
\Bigl( \begin{array}{cc} 1  &  0 \\
                           0  &  -1 \end{array} \Bigr),\quad
X_3 = \frac{ab}{\sqrt{a^2+b^2}}
\Bigl( \begin{array}{cc} 0  & \;\,1 \\
                           -1 &  \;\,0 \end{array} \Bigr),
\]
for $a,b >0$, which is orthogonal with respect to the Killing form of $\mathfrak{sl}(2,\mathbb{R})$ and satisfies
\begin{eqnarray*}
[X_2,X_3] = r X_1, \quad [X_3,X_1] = s X_2, \quad [X_1,X_2] = t X_3, \quad
\end{eqnarray*}
where $r = 2b^2/\sqrt{a^2+b^2}$, $s = 2a^2/\sqrt{a^2+b^2}$, $t = -2\sqrt{a^2+b^2}$, so that $r,s>0$ and $r+ s + t= 0$.

Hence each left-invariant Riemannian metric $\langle\cdot,\cdot\rangle_{a,b}$
on $\widetilde{SL(2,\mathbb{R})}$, corresponding to the invariant inner product
on $\mathfrak{sl}(2,\mathbb{R})$ making $\{X_1,X_2,X_3\}$ orthonormal for such a given pair $a=\frac12\sqrt{t(r+t)} >0$, $b=\frac12\sqrt{t(s+t)} > 0$, 
is cyclic. Moreover, it is traceless, as
$\sum_{j=1}^3 \langle [X_j,X_i], X_j \rangle_{a,b} = 0$, $i=1,2,3$.

The manifold $\widetilde{SL(2,\mathbb{R})}$ is a Lie group, hence parallelizable, so
$(\widetilde{SL(2,\mathbb{R})},\langle\cdot,\cdot\rangle_{a,b})$ is a spin Riemannian manifold, for each couple $(a,b)$. Moreover,
since it is a traceless cyclic homogeneous Riemannian manifold, it
has Dirac operator like that on a Riemannian symmetric space. We now check this with more detail.

Denote by $\bar X_i$ the left-invariant vector fields on $\widetilde{SL(2,\mathbb{R})}$ defined by the elements $X_i$.
The Koszul formula for left-invariant vector fields $\bar X,\bar Y,\bar Z$,
\[
2 \langle \nabla_{\bar X}\bar Y,\bar Z \rangle = \langle [\bar X,\bar Y],\bar Z \rangle -
\langle [\bar Y,\bar Z],\bar X \rangle + \langle [\bar Z,\bar X],\bar Y \rangle,
\]
gives, applying the property $r + s + t = 0$, the components $\nabla_{\bar X_i}\bar X_j$ of the Levi-Civita connection of
$\langle\cdot,\cdot\rangle_{a,b}$,
\begin{alignat*}{3}
 & \nabla_{\bar X_1}\bar X_1 = 0, &\quad& \nabla_{\bar X_1}\bar X_2 = -r \bar X_3, &\quad& \nabla_{\bar X_1}\bar X_3 = r \bar X_2,\\
 \noalign{\smallskip}
 & \nabla_{\bar X_2}\bar X_1 = s \bar X_3, && \nabla_{\bar X_2}\bar X_2 = 0, && \nabla_{\bar X_2}\bar X_3 = -s \bar X_1, \\
 \noalign{\smallskip}
 & \nabla_{\bar X_3}\bar X_1 = -t \bar X_2, && \nabla_{\bar X_3}\bar X_2 = t \bar X_1,  && \nabla_{\bar X_3}\bar X_3 = 0.
\end{alignat*}

The fibre of $\Sigma(\widetilde{SL(2,\mathbb{R})})$ is
$\mathrm{Spin}(3)\cong SU(2)$. We have endomorphisms $\rho(X_i) \in \mathrm{End}(\Delta(\mathfrak{sl}(2,\mathbb{R})))
= \mathrm{End}(\mathbb{C}^2)$, $i=1,2,3$, and there is a well-defined Clifford multiplication
\[
\mathfrak{sl}(2,\mathbb{R})
\otimes \Sigma(\widetilde{SL(2,\mathbb{R})})_w \to \Sigma(\widetilde{SL(2,\mathbb{R})})_w,
\quad X \otimes \psi \mapsto X \cdot \psi = \rho(X)\psi,
\]
for all $w \in \widetilde{SL(2,\mathbb{R})}$ and $X\in \mathfrak{sl}(2,\mathbb{R})$. Since we have the relation 
$U \cdot V + V \cdot U = -2\langle U,V\rangle I$, 
for all $U,V \in \mathrm{Cl}_{\mathbb{C}}(\mathfrak{sl}(2,\mathbb{R}))$, and where $I$ denotes the identity element,
we can choose a basis $\{\psi_1,\psi_2\}$ of $\Delta(\mathfrak{sl}(2,\mathbb{R}))$ with respect to which one has
\[
\rho(X_1) =
\Bigl( \begin{array}{cc} \mathrm{i} & \;0\\
                                    0 & \;-\mathrm{i} \end{array} \Bigr), \quad
\rho(X_2) =
\Bigl( \begin{array}{cc} 0 & \;\,\mathrm{i} \\
                           \mathrm{i} & \;\,0 \end{array} \Bigr),\quad
\rho(X_3) =
\Bigl( \begin{array}{cc} 0 & \;-1 \\
                           1 & \;0 \end{array} \Bigr).
\]

The spin connection $\nabla^{\Sigma}$ induced by the Levi-Civita connection $\nabla$ in the 
fibration $\Sigma(\widetilde{SL(2,\mathbb{R})})$ is given, from \eqref{nablasig}, by
\[
\nabla^{\Sigma}_{\bar X_i} \psi = \bar X_i(\psi) + \frac14 \sum_{j,k=1}^3 \Gamma^k_{ij}
\bar X_j \cdot \bar X_k \cdot \psi, \qquad i=1,2,3,
\]
that is, by
\begin{gather*}
\nabla^{\Sigma}_{\bar X_1} \psi  = \bar X_1(\psi)
        - \frac{r}2\Bigl( \begin{array}{cc} \mathrm{i} & 0 \\
                                            0 &  - \mathrm{i}
                          \end{array} \Bigr)\psi, \quad
\nabla^{\Sigma}_{\bar X_2} \psi = \bar X_2(\psi)
        - \frac{s}2 \Bigl( \begin{array}{cc} 0 & \;\,\mathrm{i} \\
                                             \mathrm{i} & \;\,0
                          \end{array} \Bigr)\psi, \\
\nabla^{\Sigma}_{\bar X_3} \psi  = \bar X_3(\psi)
       - \frac{t}2 \Bigl( \begin{array}{cc} 0 & -1 \\
                                    1 & 0
                          \end{array} \Bigr)\psi,
\end{gather*}
so, from  \eqref{dir}, the Dirac operator is given by
\[
D\psi = \sum_{i=1}^3 X_i \cdot X_i(\psi)
- \frac12 ( r + s + t ) \Bigl( \begin{array}{cc} -1 & 0 \\
                                    0 & -1
                          \end{array} \Bigr)\psi
         = \sum_{i=1}^3 X_i \cdot X_i(\psi),
\]
as expected.

{\footnotesize
\begin{table}[htb]
\caption{\footnotesize Nonsymmetric cyclic homogeneous Riemannian manifolds $G/K$ of a (necessarily noncompact)
classical simple absolutely real Lie group $G$, where $K$ is a closed connected subgroup of the maximal compact subgroup $L$ of $G$ and $L/K$ is isotropy-irreducible. The manifolds $G/K$ marked with
$\star$ on the seventh column, satisfying moreover the conditions on the sixth
column, if any, are examples of nonsymmetric homogeneous spin Riemannian manifolds with Dirac operator like that on a Riemannian symmetric spin space. (As for the type, see Definition \ref{deftype}.)} 
\label{table1}
\setlength\arraycolsep{2pt}
\[
\begin{tabular}{llllllc}  \hline\noalign{\smallskip}
$G$                 & $L$                        & $K$                                                &           & Type                         & & Spin \\ \noalign{\smallskip}\hline\noalign{\smallskip}
$SL(n,\mathbb{R})$  & $SO(n)$                    & $SO(i) \times SO(j)$                & (a)  &  S1       & $j=1$  & $\star$ \\
\quad $(n\geqslant 3)$ &&&&&or $n$ even& \\
   &                     & $U(n/2)$                           & (b)  &  S1                 & $n\geqslant 6$ & $\star$ \\ \hline
$SU^*(2n)$          & $Sp(n)$                    & $U(n)$                                             &           &  S1                & $n\; {\rm odd}\geqslant 3$  & $\star$ \\
   \quad $(n\geqslant 2)$               &                          & $Sp(i) \times Sp(j)$           & (a)  &  S1                  &   & $\star$ \\ \hline
$SU(p,q)$           & $S(U (p)$      & $SU(p) \times SU(q)$             &           &  NS0                      &   & $\star$ \\
  \quad $(p\geqslant q\geqslant 1)$   & \;\;$ \times U(q))$ & $S(U (i)\times U (j) \times U (k))$                & (c) &  NS2  &   & \\
                  &                          & $(U(1)\times SO(p) \times  SU(q))/\mathbb{Z}_q$    & (d) &   NS2                &  & \\
                  &                          &  $(U(1)\times SO(p) \times SU(q))/\mathbb{Z}_{2q}$  & (e) &   NS2       &  & \\
                  &                          & $(U(1)\times SU(p) \times SO(q))/\mathbb{Z}_p$     & (f) &    NS2               &  & \\
                  &                          & $(U(1)\times SU(p) \times SO(q))/\mathbb{Z}_{2p}$  & (g) &    NS2               &  & \\
                  &                          & $(U(1)\times Sp(p/2)\times SU(q))/\mathbb{Z}_{2q}$ & (e) &    NS2               &  & \\
                  &                          & $(U(1)\times SU(p)\times Sp(q/2))/\mathbb{Z}_{2p}$ & (g) &    NS2              &  & \\
                  &                          & $U(1)\times \Delta SU(p)$                          & (h) &    NS1     &  & \\
                  &                          & $(U(1)\times \Delta SU(p))/\mathbb{Z}_2$           & (i) &  NS1         &  & \\ \hline
$SO_0(p, 1)$        & $SO(p)$                    & $SO(i)\times SO(j)$              &   (j) &  S1  & $j=1$   & $\star$ \\
\quad $(p\geqslant 3)$ &&&&&or $p$ even& \\
             &                       & $U(p/2)$                                      & (e) &  S1  &  $p\geqslant 6$ & $\star$ \\  \hline
$SO_0(p, 2)$        & $SO(p)$        &    $SO(p)$                                     &  &  NS0  &   &    $\star$    \\
  \quad $(p\geqslant 3)$& \;\;$\times SO(2)$        &   $SO(i)\times SO(j) \times SO(2)$  &  (j)      &  NS1 &  $j=1$   & $\star$          \\
 &&&&&or $p$ even& \\
                  &                          & $U(p/2)\times SO(2)$                              & (e) &  NS1  & $p\geqslant 6$  & $\star$ \\ \hline
$SO_0(p, q)$        & $SO(p)$       & $SO(i) \times SO(j) \times SO(q)$ & (j) &  S2   & $j=1$ or   & $\star$ \\
\quad $(p\geqslant q\geqslant 3)$       & \;\;$\times SO(q)$       &  & &    & $p$ even   &  \\
  &  & $SO(p) \times SO(j) \times SO(k)$ & (k) & S2 &  $k=1$ or & $\star$ \\
  &&&&& $q$ even & \\
                  &    & $U(p/2)\times SO(q)$                                   & (e) &  S2  & $p\geqslant 6$ & $\star$ \\
                  &        & $SO(p) \times U(q/2)$        & (g) &  S2 & $q\geqslant 6$    & $\star$ \\
                  &                          & $\Delta SO(p)$                                     & (l) &  S1  &    & $\star$ \\ \hline
$SO^*(2n)$          & $U(n)$                     & $U(1)\times  SO(n)$                                & (m) &  NS1     &    & \\
   \quad $(n\geqslant 3)$ &                  & $(U(1)\times  SO(n))/\mathbb{Z}_2$                   & (b) &  NS1      &    & \\
                  &                          & $U(i) \times U(j)$                                           & (a) &  NS1  & n \; {\rm even} & $\star$ \\
                  &                          & $SU(n)$                                            &          &  NS0  &  & $\star$ \\
                  &                          & $(U(1)\times Sp(n/2))/\mathbb{Z}_2$                & (b) &  NS1  &    & \\ \hline
$Sp(n,\mathbb{R})$  & $U(n)$                     & $U(1)\times  SO(n)$                                 & (m) &  NS1  &    & \\
   \quad $(n\geqslant 2)$ &                  & $(U(1)\times  SO(n))/\mathbb{Z}_2$                   & (b) &  NS1  &    & \\
                  &                          & $U(i) \times U(j)$                                           & (a) &  NS1  & n \;{\rm  even} & $\star$ \\
                  &                          & $SU(n)$                                            &          &  NS0  & & $\star$ \\
                  &                          & $(U(1)\times Sp(n/2))/\mathbb{Z}_2$                & (n) &  NS1  &  & \\ \hline
$Sp(p,q)$           & $Sp(p)$        & $U(p)\times Sp(q)$                    &          &  S2   & $p \; {\rm odd}\geqslant 3$   & $\star$ \\
  \quad $(p\geqslant q\geqslant 1)$     & \;\; $\times Sp(q)$  & $Sp(p) \times U(q)$                                &          &  S2   & $q \; {\rm odd}\geqslant 3$   & $\star$ \\
                  &                          & $Sp(i) \times Sp(j) \times Sp(k)$                  & (c) &  S2   &    & $\star$ \\
                  &                          & $\Delta Sp(p)$                           & (l)  &  S1   &    & $\star$ \\ \noalign{\smallskip}\hline
\end{tabular}
\]
{\rm(a)} $n=i+j$, $i\geqslant j\geqslant 1;$
{\rm(b)} $n$ {\rm even;}
{\rm(c)} {\rm either} $i+j=p$, $i\geqslant j \geqslant 1$, $k=q$ {\rm or} $i=p$, $j+k=q$, $j\geqslant k \geqslant 1;$
{\rm(d)} {\rm $p$ odd;}
{\rm(e)} $p$ {\rm even};
{\rm(f)} {\rm $q$ odd;}
{\rm(g)} $q$ {\rm even};
{\rm(h)} $p=q$ {\rm odd};
{\rm(i)} $p=q$ {\rm even};
{\rm(j)} $p=i+j$,  $i\geqslant j\geqslant 1 $;
{\rm (k)} $q=j+k$,  $j\geqslant k\geqslant 1 $;
{\rm(l)} $p=q$;
{\rm(m)} $n$ {\rm odd};
{\rm(n)} $n$ {\rm even} $\geqslant 4$.
\end{table}}

{\footnotesize
\begin{table}[htb]
\caption{\footnotesize Simply connected nonsymmetric cyclic  homogeneous Riemannian manifolds $G/K$ of a
simple complex Lie group $G$ (considered as a real Lie group), where $K$ is a closed connected subgroup of the maximal compact subgroup $L$ of $G$ and $L/K$ is isotropy-irreducible. {\it All of them are of type\/} S1 and are nonsymmetric homogeneous spin Riemannian manifolds
with Dirac operator like that on a Riemannian symmetric spin space,
whenever they satisfy moreover the necessary conditions on the fifth column, if any. The space marked with ``no'' is not spin.}
\label{table2}
\setlength\arraycolsep{4pt}
\[
\begin{tabular}{lllcl}  \hline\noalign{\smallskip}
$G$                   & $L$     & $K$                               &           & Spin \\ \noalign{\smallskip}\hline\noalign{\smallskip}
$SL(n,\mathbb{C})$    & $SU(n)$ & $SO(n)$                                        & (a)  & $n$ even   \\
                    &       &  $S(U(i)\times U(j))$                          & (b)  & $n$ even   \\
                    &       & $Sp(n/2)$                                      & (c)  &    \\ \hline
$SO(n,\mathbb{C})$    & $SO(n)$ & $SO(i)\times SO(j)$                            & (d)  &  j=1 \; {\rm or} \; n \; {\rm even}   \\
                    &       & $U(n/2)$                                       & (e)  &    \\ \hline
$Sp(n,\mathbb{C})$    & $Sp(n)$ & $U(n)$                                         & (a)  & $n \; {\rm odd}\geqslant 3$   \\
                    &       & $Sp(i)\times Sp(j)$                            & (f)  &    \\ \hline
$E_6^{\,\mathbb{C}}$  & $E_6$   & $Sp(4)/\mathbb{Z}_2$                           &           &     \\
                    &       & $(SU(6)\times SU(2))/Z_2$                      &           &     \\
                    &       & $(\mathrm{Spin}(10)\times SO(2))/\mathbb{Z}_4$ &           &     \\
                    &       & $F_4$                                          &           &     \\ \hline
$E_7^{\,\mathbb{C}}$  & $E_7$   & $SU(8)/\mathbb{Z}_2$                           &           &     \\
                    &       & $(\mathrm{Spin}(12)\times SU(2))/\mathbb{Z}_2$ &           &     \\
                    &       & $(E_6\times U(1))/\mathbb{Z}_3$                &           &    \\ \hline
$E_8^{\,\mathbb{C}}$  & $E_8$   & $SO'(16)$                                      & (g)  &     \\
                    &       & $(E_7\times SU(2))/\mathbb{Z}_2$               &           &     \\ \hline
$F_4^{\,\mathbb{C}}$  & $F_4$   & $(Sp(3)\times SU(2))/\mathbb{Z}_2$             &           &  \bf{no}   \\
                    &       & $\mathrm{Spin}(9)$                             &           &     \\ \hline
$G_2^{\,\mathbb{C}}$  & $G_2$   & $(SU(2) \times SU(2))/\mathbb{Z}_2$            &           &    \\ \noalign{\smallskip}\hline
\end{tabular}
\]
{\rm(a)} $n\geqslant 2;$
{\rm(b)} $i+j=n\geqslant 3$, $i\geqslant j\geqslant 1;$
{\rm(c)} $n$ {\rm even}, $n\geqslant 4;$
{\rm(d)} $i+j=n\geqslant 5$, $i\geqslant j\geqslant 1;$ \\
{\rm(e)} $n$ {\rm even}, $n\geqslant 6;$
{\rm(f)} $i+j=n\geqslant 2$, $i\geqslant j\geqslant 1$.
{\rm(g)} see \cite[\S\S 8.2.11]{Wol}.
\end{table}}

{\footnotesize
\begin{table}[htb]
\caption{\footnotesize Triples of Lie algebras $(\mathfrak{g},\mathfrak{l},\mathfrak{k})$ of Lie groups $(G,L,K)$ such that $G/K$ is a nonsymmetric cyclic homogeneous Riemannian manifold of a (necessarily noncompact)
exceptional simple absolutely real Lie group $G$, where $K$ is a closed connected subgroup of the maximal compact subgroup $L$ of $G$ and $L/K$ is isotropy-irreducible. Except for the manifolds marked with ``no'' on the sixth column, the rest of manifolds $G/K$ are examples of nonsymmetric homogeneous spin Riemannian manifolds with Dirac operator like that on a Riemannian symmetric spin space.} 
\label{table3}
\setlength\arraycolsep{4pt}
\[
\begin{tabular}{lllccc}  \hline\noalign{\smallskip}
$\mathfrak{g}$                   & $\mathfrak{l}$  & $\mathfrak{k}$    &          & Type & Spin \\ \noalign{\smallskip}\hline\noalign{\smallskip}
$\mathfrak{e}_{6(6)}$   & $\mathfrak{sp}(4)$                   & $\mathfrak{u}(4)$                                      &          & S1 & {\bf no} \\
                      &                         & $\mathfrak{sp}(i)\oplus \mathfrak{sp}(j)$                         & (a) & S1 & \\ \hline
$\mathfrak{e}_{6(2)}$   & $\mathfrak{su}(6) \oplus \mathfrak{su}(2)$  & $\mathfrak{su}(6) \oplus \mathfrak{u}(1)$       &          & S2 & \\
                      &                         & $\mathfrak{so}(6) \oplus \mathfrak{su}(2)$                        &          & S2 & \\
                      &  & $\mathfrak{s}(\mathfrak{u}(i) \oplus \mathfrak{u}(j)) \oplus \mathfrak{su}(2)$          & (b) & S2 & \\ \hline
$\mathfrak{e}_{6(-14)}$ & $\mathfrak{so}(10) \oplus \mathfrak{so}(2)$
                                & $\mathfrak{so}(i)\oplus \mathfrak{so}(j) \oplus \mathfrak{so}(2)$                 & (c) &  NS1 & \\
                      &                         & $\mathfrak{u}(5) \oplus \mathfrak{so}(2)$                         &          &   NS1 & \\
                      &                         & $\mathfrak{so}(10)$                                               &          &  NS0 & \\ \hline
$\mathfrak{e}_{6(-26)}$ & $\mathfrak{f}_4$                     & $\mathfrak{sp}(3) \oplus \mathfrak{su}(2)$             &          & S1 & \bf{no} \\
                      &                         & $\mathfrak{so}(9)$                                                &          & S1 & \\  \hline
$\mathfrak{e}_{7(7)}$   & $\mathfrak{su}(8)$    & $\mathfrak{so}(8)$                                                    &          & S1 & \\
                      &                         & $\mathfrak{sp}(4)$                                                &          & S1 & \\
                      &                         & $\mathfrak{s}(\mathfrak{u}(i)\oplus \mathfrak{u}(j))$             & (d) & S1 & \\ \hline
$\mathfrak{e}_{7(-5)}$  & $\mathfrak{so}(12) \oplus \mathfrak{su}(2)$    & $\mathfrak{so}(12) \oplus \mathfrak{so}(2)$  &          & S2 & \\
                      &                & $\mathfrak{so}(i)\oplus \mathfrak{so}(j) \oplus  \mathfrak{su}(2)$         & (e) & S2 & \\
                      &                         & $\mathfrak{u}(6) \oplus  \mathfrak{su}(2)$                        &          & S2 & \\ \hline
$\mathfrak{e}_{7(-25)}$ & $\mathfrak{e}_6 \oplus \mathfrak{so}(2)$       & $\mathfrak{sp}(4) \oplus \mathfrak{so}(2)$   &          &  NS1& \\
             &                         & $\mathfrak{su}(6) \oplus \mathfrak{su}(2)  \oplus \mathfrak{so}(2)$        &          &  NS1& \\
             &                         & $\mathfrak{so}(10) \oplus \mathfrak{so}(2)   \oplus \mathfrak{so}(2)$      &          &  NS1& \\
             &                         & $\mathfrak{f}_4   \oplus \mathfrak{so}(2)$                                &          &  NS1& \\
             &                         & $\mathfrak{e}_6$                                                           &          & NS0 & \\ \hline
$\mathfrak{e}_{8(8)}$   & $\mathfrak{so}(16)$     & $\mathfrak{so}(i) \oplus \mathfrak{so}(j)$                          & (f) & S1 & \\
             &                         & $\mathfrak{u}(8)$                                                          &          & S1 & \\ \hline
$\mathfrak{e}_{8(-24)}$ & $\mathfrak{e}_7 \oplus \mathfrak{su}(2)$        & $\mathfrak{e}_7 \oplus \mathfrak{so}(2)$    &          & S2 & \\
             &                         & $\mathfrak{su}(8)  \oplus \mathfrak{su}(2)$                                &          & S2 & \\
             &                         & $\mathfrak{so}(12)\oplus \mathfrak{su}(2) \oplus \mathfrak{su}(2)$         &          & S2 & \\
             &                         & $\mathfrak{e}_6 \oplus \mathfrak{so}(2) \oplus \mathfrak{su}(2)$           &          & S2 & \\ \hline
$\mathfrak{f} _{4(4)}$   &  $\mathfrak{sp}(3) \oplus \mathfrak{sp}(1)$ & $\mathfrak{sp}(3) \oplus \mathfrak{so}(2)$      &          & S2 & \\
                      &                         & $\mathfrak{sp}(2)\oplus \mathfrak{sp}(1) \oplus \mathfrak{sp}(1)$ &          & S2 & \\
                      &                         & $\mathfrak{u}(3) \oplus \mathfrak{sp}(1)$                         &          & S2 & \\ \hline
$\mathfrak{f} _{4(-20)}$ & $\mathfrak{so}(9)$                  & $\mathfrak{so}(i)\oplus \mathfrak{so}(j)$               & (g) & S1 & {\bf no} \\
                      &                         & $\mathfrak{so}(8)$                                                &          & S1 & \\ \hline
$\mathfrak{g}_{2(2)}$   & $\mathfrak{so}(4)$                   & $\mathfrak{u}(2)$                                               &          & S1 & \\
             &                         & $\mathfrak{so}(3)$                                                         &          & S1 & \\
             &                         & $\mathfrak{so}(2) \oplus \mathfrak{so}(2)$                                 &          & S1 & \\ 
\noalign{\smallskip}\hline
\end{tabular}
\]
{\rm(a)} $i+j=4$, $i\geqslant j\geqslant 1;$
{\rm(b)} $i+j=6$, $i\geqslant j \geqslant 1;$
{\rm(c)} $i+j=10$,  $i\geqslant j \geqslant  1;$
{\rm(d)} $i+j=8$, $i\geqslant j \geqslant 1;$ 
{\rm(e)} $i+j=12$,  $i\geqslant j \geqslant 1$
{\rm(f)} $i+j=16$,  $i\geqslant j \geqslant 1;$
{\rm(g)} $i+j=9$,  $i\geqslant j\geqslant 2$.
\end{table}}

{\footnotesize
\begin{table}
\caption{\footnotesize
Cahen-Gutt spaces (compact simply connected Riemannian symmetric spin spaces $L/K$ with $L$ simple).}
\label{table4}
\begin{center}
{\setlength\arraycolsep{0.2pt}
\begin{tabular}{@{}lll} \hline\noalign{\smallskip}
$L$          & $K$                                                    &                \\  \noalign{\smallskip}\hline\noalign{\smallskip}
$SU(n)$      & $SO(n)$                                                & $n$ \rm{even} $\geqslant 4$                                     \\  \hline
$SU(p+q)$    & $S(U(p) \times U(q))$                                  & $p+q$ \rm{even}, $p\geqslant q\geqslant 1$                  \\  \hline
$SU(2n)$     & $Sp(n)$                                                & $n \geqslant 2$                                    \\  \hline
$SO(n)$      & $SO(n-1)$                                              & $n\geqslant 3$                                     \\  \hline
$SO(p+q)$    & $SO(p) \times SO(q)$                                   & $p+q$ \rm{even}, $p\geqslant q\geqslant 2$ \\  \hline
$SO(2n)$     & $U(n)$                                                 & $n \geqslant 3$                                    \\  \hline
$Sp(n)$      & $U(n)$                                                 & $n$ \rm{odd}  $\geqslant 3$                      \\  \hline
$Sp(p+q)$    & $Sp(p)\times Sp(q)$                                    & $p\geqslant q\geqslant 1$                                   \\  \hline
$E_6$        & $Sp(4)/\mathbb{Z}_2$                                   &                                                    \\
             & $(SU(6)\times SU(2))/\mathbb{Z}_2$                     &                                                    \\
             & $(\mathrm{Spin}(10)\times SO(2))/\mathbb{Z}_4$         &                                                    \\
             & $F_4$                                                  &                                                    \\  \hline
$E_7$        & $SU(8)/\mathbb{Z}_2$                                   &                                                    \\
             & $(\mathrm{Spin}(12) \times SU(2))/\mathbb{Z}_2$        &                                                    \\
             & $(E_6 \times U(1))/\mathbb{Z}_3$                       &                                                    \\  \hline
$E_8$        & $SO'(16)$                                              &                                                    \\
             & $(E_7 \times SU(2))/\mathbb{Z}_2$                      &                                                    \\  \hline
$F_4$        & $\mathrm{Spin}(9)$                                     &                                                    \\  \hline
$G_2$        & $(SU(2) \times SU(2))/\mathbb{Z}_2$                    &                                                    \\  \hline
\end{tabular}
}
\end{center}
\end{table}}


\begin{thebibliography}{99}

\bibitem{AmmBar} B.~Ammann and C.~B\"ar, The Dirac operator on nilmanifolds and collapsing circle bundles, Ann.\ Global
Anal.\ Geom.\ {\bf 16} (1998), 221–-253. 

\bibitem{Bar} C.~B\"ar, The Dirac operator on homogeneous spaces and its spectrum on 3-dimensional lens spaces,
Arch.\ Math.\ (Basel) {\bf 59} (1992), 65–-79. 

\bibitem{BorHir} A.~Borel and F.~Hirzebruch, Characteristic classes and homogeneous spaces I, Amer.\ J. Math.\ {\bf 80} (1958), 458–-538. 

\bibitem{CahGut} M.~Cahen and S.~Gutt, Spin structures on compact simply connected Riemannian symmetric spaces,
Simon Stevin {\bf 62} (1988), 209--242. 

\bibitem{GadGonOub1} P.~M.~Gadea, J.~C.~G\'onzalez-D\'avila and 
J.~A.~Oubi\~na, Cyclic metric Lie groups, Monatsh.\ Math.\ {\bf 176}
(2015), 219--239. 

\bibitem{GadGonOub2} P.~M.~Gadea, J.~C.~G\'onzalez-D\'avila and 
J.~A.~Oubi\~na, Cyclic homogeneous Riemannian manifolds, Ann.\
Mat.\ Pura Appl.\ (4) (2015), doi: 10.1007/s10231-015-0534-7.

\bibitem{GadOub} P.~M.~Gadea and 
J.~A.~Oubi\~na, Reductive homogeneous pseudo-Riemannian manifolds, Monatsh.\ Math., {\bf 124}(1) (1997), 17--34. 

\bibitem{Gon} J.~C.~G\'onzalez-D\'avila, Harmonicity and minimality of distributions on Riemannian manifolds via the
intrinsic torsion, Rev.\ Mat.\ Iberoam.\ {\bf 30}(1) (2014), 247--275. 

\bibitem{Hel} S.~Helgason, Differential geometry, Lie groups and symmetric spaces, Academic Press, New York, 1978.

\bibitem{Ike} A.~Ikeda, Formally self adjointness for the Dirac operator on homogeneous spaces, Osaka J. Math.\ {\bf 12} (1975), 173--185. 

\bibitem{KobNom} S.~Kobayashi and K.~Nomizu, Foundations of differential geometry, II, Interscience Publishers, New York, 1969.

\bibitem{KowTri} O.~Kowalski and F.~Tricerri, Riemannian manifolds of dimension $n \leq 4$, Conferenze del Seminario di
Matematica Univ.\ di Bari, vol.\ 222, Laterza, Bari, 1987.

\bibitem{LawMic} H.~B.~Lawson and M.~L.~Michelsohn, Spin Geometry. Princeton Univ. Press, Princeton, 1990.

\bibitem{TriVan} F.~Tricerri and L.~Vanhecke, Homogeneous structures on Riemannian manifolds, Cambridge Univ.\ Press, Cambridge, UK, 1983.

\bibitem{Wol} J.~A.~Wolf, Spaces of constant curvature, 6th edn, AMS Chelsea Publishing, Providence, RI, 2011.

\end{thebibliography}
\end{document}